\newtheorem{Theoreme}{Theorem}
\newtheorem{Definition}{Definition}[section]
\newtheorem{Proposition}{Proposition}[section]
\newtheorem{Lemme}{Lemma}[section]
\newtheorem{Remarque}{Remark}[section]
\numberwithin{equation}{section}
\newenvironment{proof}{\paragraph{Proof:}}{\hfill$\square$}
\def\vu{\vec{u}}
\def\vv{\vec{v}}
\def\vf{\vec{f}}
\def\vg{\vec{g}}
\def\vn{\vec{\nabla}}
\def\Rt{\mathbb{R}^2}
\def\R{\mathbb{R}}
\def\mPl{\mathcal{P}^{\text{log}}}
\title{\bf Remarks on variable Lebesgue spaces and  fractional Navier-Stokes equations}
\author{Gast\'on Vergara-Hermosilla\footnote{\emph{coibungo@gmail.com}} }
\affil{\footnotesize LaMME, Univ. Evry, CNRS, Universit\'e Paris-Saclay, 91025, Evry, France.}
\begin{document}

	\maketitle

	\begin{abstract}  
In this work we study the 3D Navier-Stokes equations, under the action of an external force and with the fractional Laplacian operator $(-\Delta)^{\alpha}$ in the diffusion term, from the point of view of variable Lebesgue spaces. Based on decay estimates of the fractional heat kernel we prove the existence and uniqueness of mild solutions on this functional setting. Thus, in a first theorem we obtain an unique local-in-time solution in the space $L^{p(\cdot)} \left( [0,T], L^{q} (\R^3) \right)$. As a bi-product, in a second theorem we prove the existence of an unique global-in-time solution in the mixed-space $\mathcal{L}^{p(\cdot)}_{\frac{3}{2\alpha -1}}(\mathbb{R}^3,L^\infty([0,T[))$.	
\end{abstract}

\tableofcontents    
\section{Introduction}

\subsection{General setting}
In this paper we consider the 
fractional incompressible Navier-Stokes equations defined in $\R^3$, namely the system 
\begin{equation}\label{NS_Intro}
\begin{cases}
\partial_t\vu=- (-\Delta)^{\alpha} \vu-(\vu \cdot \vn) \vu+\vn P + \vec{f}, \qquad \\[3pt]
div(\vu)=0,\\[3pt]
\vu(0,x)=\vu_0(x),\quad div(\vu_0)=0, \qquad x\in \mathbb{R}^3,
\end{cases}
\end{equation}
with $\alpha\in ]1/2,1]$ and $(-\Delta)^{\alpha}$
is the fractional Laplacian operator, which is 
defined at the Fourier level by the symbol $|\xi|^{2\alpha}$. 
Considering the traditional notation, the vector field 
$\vu:[0, +\infty[\times \mathbb{R}^3 \longrightarrow \mathbb{R}^3$ denotes the velocity of a viscous, incompressible and homogeneous fluid,   $P:[0, +\infty[\times\mathbb{R}^3 \longrightarrow  \mathbb{R}$ is its pressure and $\vu_0:\mathbb{R}^3 \longrightarrow \mathbb{R}^3$, $\vf:[0, +\infty[\times \mathbb{R}^3 \longrightarrow \mathbb{R}^3$ stand for a given initial data and a given external force, respectively. \\

This system is of interest by several reasons. For instance, it can be seen as a natural generalization of the classical incompressible 3D Navier-Stokes equations. 
In fact, equations \eqref{NS_Intro} are the equations resulting from replacing the Laplacian $(-\Delta)$ in the Navier-Stokes equation by $(-\Delta)^{\alpha}$.
On the other hand, system \eqref{NS_Intro} has similar energy estimates and scaling properties as the classical Navier-Stokes equations.
In this case, the existence and uniqueness of mild solutions was pioneer by the works of Fujita and Kato \cite{FujitaKato64,katofujita}, where they reformulated the classical Navier-Stokes equation into an integral equation and proved the local well-posedness 
on appropriate Sobolev and Lebesgue spaces. Since \cite{FujitaKato64,katofujita}, many other functional spaces had been considered in the literature, such as Besov spaces \cite{Cannoneonde,PG02}, Morrey spaces \cite{GigaMikawa,Kato92}, Fourier-Herz spaces \cite{CANNONEWU} and the $BMO^{-1}$ space \cite{KT01}. 
For a rigorous review about these and others possible functional spaces, see the book \cite{Lemarie2016}. \\

In this work we are concerned by the existence of \emph{mild} solutions of \eqref{NS_Intro} which are obtained via a fixed-point method. 
To develop this theory, the choice of a \emph{appropriate} functional setting is crucial. 
Lions in \cite{MR0244606} proved that in the case that $\alpha \geq 5/4$, the  fractional 3D Navier-Stokes equations \eqref{NS_Intro} have a global and unique regular solution on appropriate Lebesgue and Sobolev spaces (for a simple proof of this fact, see the appendix of \cite{MR2016814}). Thus, since the seminal work of Lions  many other functional spaces has seen considered to obtain global solutions for \eqref{NS_Intro}, for instance Q-type spaces \cite{MR2679016,MR2651705}, Besov spaces \cite{wu2004generalized,yu2012well}, Triebel-Lizorki type spaces \cite{MR3580298}, the $BMO^{1-2\alpha}$ space \cite{MR2656417}, etc.
\\


In this paper we focus our efforts on the exploration of some existence and uniqueness results for equations (\ref{NS_Intro}) 
considering as functional framework the Lebesgue spaces of variable exponent $L^{p(\cdot)}(\R^3)$.
Intuitively, these spaces are a natural generalization of the classical Lebesgue spaces $L^p$, however, in the construction of them there exist particular issues that make these spaces quite dissimilar. 
Being more precises, in our functional setting the (classical) constant parameter $p\in[1,+\infty[$ is replaced by a measurable function $p(\cdot):\mathbb{R}^3\longrightarrow [1,+\infty[$.
Thus, in order to construct the spaces  $L^{p(\cdot)}$, we start by considering a measurable function $\vf:\mathbb{R}^3\longrightarrow \mathbb{R}^3$, and we define the \emph{modular function} $\varrho_{p(\cdot)}$ associated to $p(\cdot)$ by the formula
\begin{equation}\label{Def_Modular_Intro}
\varrho_{p(\cdot)}(\vf)=\int_{\mathbb{R}^3}|\vf(x)|^{p(x)}dx.
\end{equation}
In this point,
we should remark that in the case that $p(\cdot)\equiv p\in [1,+\infty[$, it is possible recover the classical Lebesgue spaces $L^p$. In fact, the $L^p$-norm can be obtained by considering 
the modular function as follow, 
$$\|\vf\|_{L^{p}(\R^3)}=\left(
 \varrho_{p}(\vf)
 \right)^{\frac1p}.$$
At this stage a relevant difference arises; if we consider the measurable function  $p(\cdot)$, we cannot simply consider the variable exponent $\frac{1}{p(\cdot)}$
instead of the constant exponent $\frac1p$ in the previous expression. 
In order to by-pass this issue, it is classical to equip the space $L^{p(\cdot)}$
with the  \emph{Luxemburg norm} associated to the modular function $\varrho_{p(\cdot)  }$ (see \cite{Cruz_Libro},  \cite{Diening_Libro} and \cite{BookALEX}), which is given by the expression 
\begin{equation}\label{Def_LuxNormLebesgue}
\|\vf\|_{L^{p(\cdot)} (\R^3) }=\inf\left\{\lambda > 0: \, 
\int_{\mathbb{R}^3}\left|\frac{\vf(x)}{\lambda}  \right|^{p(x)}dx 
\leq1 \right\}.
\end{equation}
Then, we define the variable Lebesgue spaces
$L^{p(\cdot)}(\mathbb{R}^3)$ as the set of measurable functions such that the quantity $\|\cdot\|_{L^{p(\cdot)}} (\R^3)$ above is finite.
A comprehensive presentation on the theory of variable Lebesgue spaces can be consulted in the books  \cite{Cruz_Libro},  \cite{Diening_Libro} and \cite{BookALEX}. \\

To the best of our knowledge, these functional spaces has not been considered before in the analysis of the fractional Navier-Stokes equations. Thus, our aim in this article is to present a first application of the
variable Lebesgue spaces
to the study of this system. 
More precisely, here we study 
mild solutions for the fractional Navier-Stokes equations considering  variable Lebesgue spaces $ L^{p(\cdot)} (\R^3) $ as functional setting. 
The presentation of these results motivate the next subsection.

\subsection{Presentation of the results} 

In this subsection we state our main results about the well-posedness of the 3D fractional Navier-Stokes equations on  $ L^{p(\cdot)}$-spaces. 
Thus, in order to precise the presentation of our theorems, we must introduce the following two definitions. In the first of them, we formalize 
the notion of variable exponent,  set (or class) of variable exponents and limit exponents.
\begin{Definition}
	Let consider $n\in \mathbb{N}$, and a function $p:\mathbb{R}^n\longrightarrow [1,+\infty[$. Then, $p$ will be called a {\it variable exponent} if $p(\cdot)$ is a measurable function and we define $\mathcal{P}(\mathbb{R}^n)$ to be the set of variable exponents. Moreover, we define the {\it limit exponents} $p^-= {\mbox{inf ess}}_{x\in \mathbb{R}^n}  \; \{p(x)\}$ and $p^+= {\mbox{sup ess}}_{x\in \mathbb{R}^n} \; \{p(x)\}$. 
\end{Definition}
In the next, we define a class of variable exponents which is related with the  boundedness  of some classical operators involved in the analysis of Navier-Stokes type equations, such as the Hardy-Littlewood maximal function and Riesz potentials (see Subsection \ref{Secc_Notaciones_Presentaciones} for precise details about it).

\begin{Definition}
	Let $p(\cdot)\in \mathcal{P}(\mathbb{R}^n)$ and consider the limit value 
	$
	\frac{1}{p_\infty}=\underset{|x|\to +\infty}{\lim}\frac{1}{p(x)}.$ 
	We say that $p(\cdot)\in \mathcal{P}(\mathbb{R}^n)$ belongs to the class
	$\mathcal{P}^{log}(\mathbb{R}^n)$
	if we have:  
	\begin{enumerate}
		\item $\left|\frac{1}{p(x)}-\frac{1}{p(y)}\right|\leq \frac{C}{\log(e+1/|x-y|)}$ for all $x,y\in \mathbb{R}^n,$ and
		\item $\left|\frac{1}{p(x)}-\frac{1}{p_\infty}\right| \leq \frac{C}{\log(e+|x|)}$  for all  $x\in \mathbb{R}^n.$
	\end{enumerate}
\end{Definition}

With this information at hand, we continue with the presentation of our main results. In a first theorem, we will study the existence and uniqueness of a mild solution by considering a Lebesgue space of variable exponent in the time variable $t>0$, 
and by setting a classical $L^q$-space in the space variable $x\in\R^3$. 
This first result reads as follows. 

\begin{Theoreme}\label{Theoreme_1}
Let $\alpha\in ]\frac 1 2, 1],\ p(\cdot )\in \mPl(\Rt)$ with $p^->2$ and fix a parameter $q>\frac{3}{2\alpha -1}$ by the relationship
$\frac{\alpha}{p(\cdot)}+\frac{3}{2q}<\alpha - \frac{1}{2}$. 
If $\vf\in L^{1} \left( [0,+\infty[,  L^{q} (\R^3) \right)$ is a divergence free exterior force and if $\vu_0\in L^{q} (\Rt)$ is a divergence free initial data, 
then there exists a time $0<T<+\infty$ and a unique mild solution of the fractional Navier-Stokes equations \eqref{NS_Intro} in the space $L^{p(\cdot)} \left( [0,T], L^{q} (\R^3) \right)$.
\end{Theoreme}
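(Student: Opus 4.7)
The plan is to carry out a fixed-point argument of Kato--Picard type in the Banach space $E_T := L^{p(\cdot)}([0,T], L^q(\R^3))$. After applying the Leray--Hopf projector $\mathbb{P}$ (to eliminate the pressure by using the divergence-free condition) and Duhamel's principle, the system \eqref{NS_Intro} rewrites as the integral identity
\[
\vu(t) \;=\; e^{-t(-\Delta)^{\alpha}}\vu_0 \;+\; \int_0^t e^{-(t-s)(-\Delta)^{\alpha}}\mathbb{P}\vf(s)\,ds \;-\; B(\vu,\vu)(t),
\]
with $B(\vu,\vv)(t) := \int_0^t e^{-(t-s)(-\Delta)^{\alpha}}\mathbb{P}\,\nabla\cdot(\vu\otimes\vv)(s)\,ds$. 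The goal is to show that the associated map $\Phi(\vu) := (\text{linear}) + (\text{force}) - B(\vu,\vu)$ is a strict contraction on a small ball of $E_T$ provided $T$ is chosen small enough.

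For the non-bilinear terms I would use that the fractional heat semigroup $\{e^{-t(-\Delta)^{\alpha}}\}_{t\ge 0}$ is a contraction on each classical $L^q(\R^3)$ (its kernel is a non-negative $L^1$-function of unit mass), hence $\|e^{-t(-\Delta)^{\alpha}}\vu_0\|_{L^q}\le\|\vu_0\|_{L^q}$ for every $t\ge 0$. Taking the Luxemburg norm in time then bounds the free-evolution term by $\|\vu_0\|_{L^q}\cdot\|\mathbf{1}_{[0,T]}\|_{L^{p(\cdot)}}$, which is finite and vanishes as $T\to 0^+$ thanks to $p^->2$. For the external force the same pointwise bound combined with a Minkowski-type inequality in time yields a control of the shape $C(T)\,\|\vf\|_{L^1([0,+\infty[,L^q)}$ with $C(T)\to 0$ as $T\to 0^+$.

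The heart of the proof lies in the bilinear estimate. The smoothing of the fractional heat kernel, combined with the $L^q$-boundedness of $\mathbb{P}$, yields
\[
\|\nabla e^{-\tau(-\Delta)^{\alpha}}(\vu\otimes\vv)\|_{L^q(\R^3)} \;\le\; C\,\tau^{-\frac{1}{2\alpha}-\frac{3}{2\alpha q}}\,\|\vu\|_{L^q}\|\vv\|_{L^q},
\]
and integrating in time gives $\|B(\vu,\vv)(t)\|_{L^q}\le C\int_0^t(t-s)^{-\sigma}\|\vu(s)\|_{L^q}\|\vv(s)\|_{L^q}\,ds$ with $\sigma := \frac{1}{2\alpha}+\frac{3}{2\alpha q}$. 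The hypothesis $q>\frac{3}{2\alpha-1}$ says precisely that $\sigma<1$, so the convolution kernel behaves as a one-dimensional Riesz potential of order $1-\sigma$ restricted to $[0,T]$. To convert this pointwise inequality into an inequality in $L^{p(\cdot)}_tL^q_x$ I would invoke the boundedness of the Hardy--Littlewood maximal function and of Riesz potentials on variable Lebesgue spaces, which holds exactly because $p(\cdot)\in\mPl$ and $p^->2$. The strict scaling condition $\frac{\alpha}{p(\cdot)}+\frac{3}{2q}<\alpha-\frac{1}{2}$ rewrites as $\frac{1}{p(\cdot)}<1-\sigma$, and this subcritical gap produces a factor $T^{\epsilon}$ with some fixed $\epsilon>0$ in the resulting bilinear constant.

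Assembling the three estimates and taking $T$ small enough, $\Phi$ becomes a strict contraction on a suitable ball of $E_T$ and the Banach fixed-point theorem yields a unique local mild solution, as claimed. The step I expect to be the main obstacle is the last one just described: transferring the pointwise Riesz-potential bound into a norm estimate on $L^{p(\cdot)}_tL^q_x$ requires careful handling of the variable exponent framework, since the critical Riesz-potential mapping from $L^{p(\cdot)/2}$ would land in an auxiliary variable-exponent space whose reciprocal differs from $\frac{1}{p(\cdot)}$ by a non-constant amount. One must therefore work at a slightly sub-critical level, use a classical Riesz-order $1-\sigma'$ with $\sigma<\sigma'<1$, embed the resulting target space back into $L^{p(\cdot)}([0,T])$ using the finite measure of the interval, and keep track of the $T^{\epsilon}$-gain provided by the strict scaling hypothesis.
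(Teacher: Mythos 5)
Your proposal follows essentially the same route as the paper: a Banach--Picard fixed point in $L^{p(\cdot)}_t(L^q_x)$, with the linear and force terms controlled by the $L^1$-normalization of the fractional heat kernel together with $\|\mathbf{1}_{[0,T]}\|_{L^{p(\cdot)}}\lesssim \max\{T^{1/p^-},T^{1/p^+}\}$, and the bilinear term reduced via the semigroup smoothing estimate to a one-dimensional Riesz potential of order $\beta=1-\frac{1}{2\alpha}-\frac{3}{2\alpha q}$ in the time variable, closed using the boundedness of Riesz potentials on variable Lebesgue spaces, the strict scaling hypothesis $\frac{1}{p(\cdot)}<\beta$, and the embedding of variable Lebesgue spaces on the bounded interval $[0,T]$. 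The only (minor, technical) divergence is that the paper closes the Riesz-potential step by duality --- applying the norm conjugate formula and letting $\mathcal{I}_\beta$ act on the dual test function $\psi$ with H\"older splitting $1=\frac{2}{p(\cdot)}+\frac{1}{\tilde p(\cdot)}$ (whence $p^->2$) --- rather than by your primal sub-critical adjustment of the Riesz order, which is where your sketch would need the most care.
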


We should stress the fact that, in this theorem we first measure the information in the space variable and then we measure the information in the time variable. 
In our next result, we will study a variant of the variable Lebesgue spaces for the space variable $x\in\R^3$ and we will set the classical space $L^\infty (\R^3)$
as the functional framework for the time variable $t\in \R_+$. The theorem reads as follows. 

\begin{Theoreme}\label{Theoreme_2}
Consider $\alpha\in ]\frac 1 2, 1]$, a variable exponent $p(\cdot)\in \mathcal{P}^{\log}(\mathbb{R}^3)$ such that $p^->1$, an initial data $\vu_0\in \mathcal{L}^{p(\cdot)}_{   \frac{3}{2\alpha -1}      }(\mathbb{R}^3)$ 
such that $div(\vu_0)=0$,
and let $\vf$ be a divergence free external force such that $\vf=div(\mathcal{F})$ where $\mathcal{F}$ is a tensor in  $ \mathcal{L}^{\frac{p(\cdot)}{2}}_{\frac{3}{2(2\alpha-1)}}(\mathbb{R}^3, L^\infty([0,T[))$.
If  $\|\vu_0\|_{\mathcal{L}^{p(\cdot)}_{   \frac{3}{2\alpha -1}        }} +
\|\mathcal{F}\|_{\mathcal{L}^{\frac{p(\cdot)}{2}}_{\frac{3}{2(2\alpha-1)},x}(L^\infty_t)}$ is small enough, then the fractional Navier-Stokes equations (\ref{NS_Intro}) admits a unique, global mild solution in the space 
$\mathcal{L}^{p(\cdot)}_{   \frac{3}{2\alpha -1}      }
(\mathbb{R}^3,L^\infty([0,T[))$.
\end{Theoreme}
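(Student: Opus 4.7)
The plan is to recast the problem in its mild (Duhamel) form and solve the resulting fixed-point equation in the Banach space $X := \mathcal{L}^{p(\cdot)}_{3/(2\alpha-1)}(\R^3,L^\infty([0,T[))$ by a standard Picard iteration. Writing $\mathbb{P}$ for the Leray projector and $G_\alpha(t,\cdot)$ for the convolution kernel of $e^{-t(-\Delta)^\alpha}$, the equation to solve is
\begin{equation*}
\vu(t,x) = G_\alpha(t,\cdot)\ast\vu_0(x) + \int_0^t G_\alpha(t-s,\cdot)\ast\mathbb{P}\,\mathrm{div}\bigl(\mathcal{F}-\vu\otimes\vu\bigr)(s,x)\,ds,
\end{equation*}
so the proof reduces to three estimates: (i) a linear bound $\|G_\alpha(t,\cdot)\ast\vu_0\|_X\lesssim \|\vu_0\|_{\mathcal{L}^{p(\cdot)}_{3/(2\alpha-1)}}$; (ii) a drift bound for the $\mathcal{F}$-term by $\|\mathcal{F}\|_{\mathcal{L}^{p(\cdot)/2}_{3/(2(2\alpha-1)),x}(L^\infty_t)}$; and (iii) a bilinear estimate $\|B(\vu,\vv)\|_X\lesssim \|\vu\|_X\|\vv\|_X$ for the nonlinear Duhamel operator $B(\vu,\vv) := -\int_0^t G_\alpha(t-s,\cdot)\ast \mathbb{P}\,\mathrm{div}(\vu\otimes\vv)\,ds$. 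Combined with the smallness hypothesis on the data, these close a contraction scheme in $X$ and produce the unique global mild solution.

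For (i), I would use the pointwise decay of the fractional heat kernel (already recalled earlier in the paper), $|G_\alpha(t,x)|\lesssim t^{-3/(2\alpha)}(1+|x|t^{-1/(2\alpha)})^{-3-2\alpha}$, to dominate $|G_\alpha(t,\cdot)\ast\vu_0(x)|$ by a constant multiple of the Hardy--Littlewood maximal function $M\vu_0(x)$, uniformly in $t>0$. Since $p(\cdot)\in\mPl(\R^3)$ with $p^->1$, $M$ is bounded on $L^{p(\cdot)}(\R^3)$, and this boundedness transfers to the Morrey--variable Lebesgue scale $\mathcal{L}^{p(\cdot)}_{3/(2\alpha-1)}$, delivering (i).

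For (ii) and (iii) the crucial observation is that after integrating $\nabla G_\alpha(t-s,x-y)$ in $s\in[0,T[$ against an $L^\infty_t$ function, one is essentially reduced to the Riesz potential of order $2\alpha-1$:
\begin{equation*}
\biggl|\int_0^t \nabla G_\alpha(t-s,\cdot)\ast h(s,\cdot)(x)\,ds\biggr| \lesssim \int_{\R^3}\frac{\|h(\cdot,y)\|_{L^\infty_t}}{|x-y|^{3-(2\alpha-1)}}\,dy = I_{2\alpha-1}\!\bigl(\|h\|_{L^\infty_t}\bigr)(x).
\end{equation*}
Under the log-Hölder assumption, $I_{2\alpha-1}$ maps $\mathcal{L}^{p(\cdot)/2}_{3/(2(2\alpha-1))}(\R^3)$ boundedly into $\mathcal{L}^{p(\cdot)}_{3/(2\alpha-1)}(\R^3)$---the two Morrey indices are designed precisely so that this mapping respects the scaling. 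Taking $h=\mathcal{F}$ yields (ii); taking $h=\vu\otimes\vv$ and applying a variable-exponent Hölder inequality $\|\vu\otimes\vv\|_{\mathcal{L}^{p(\cdot)/2}_{3/(2(2\alpha-1)),x}(L^\infty_t)}\le \|\vu\|_X\|\vv\|_X$ yields (iii).

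The main obstacle is step (iii): one has to combine simultaneously the fractional scaling of the kernel, the Morrey indices $3/(2\alpha-1)$ and $3/(2(2\alpha-1))$ (calibrated exactly so that $I_{2\alpha-1}$ maps the squared space back into the solution space), and the variable-exponent calculus. It is precisely at this juncture that the hypothesis $p(\cdot)\in\mPl(\R^3)$ with $p^->1$ becomes indispensable, since it is the classical condition guaranteeing the boundedness of both $M$ and $I_{2\alpha-1}$ on these variable Morrey--Lebesgue spaces; without it, neither the linear nor the bilinear estimate would be available.
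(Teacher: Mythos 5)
Your proposal follows essentially the same route as the paper: the Duhamel formulation and Banach--Picard contraction in $\mathcal{L}^{p(\cdot)}_{\frac{3}{2\alpha-1}}(\mathbb{R}^3,L^\infty([0,T[))$, domination of the linear term by the Hardy--Littlewood maximal function via the radial decay of $\mathfrak{g}^\alpha_t$, and reduction of both the force term and the bilinear term to the Riesz potential $\mathcal{I}_{2\alpha-1}$ combined with the mixed-space mapping property of Proposition \ref{Proposition_RieszPotential} and the H\"older inequality of Remark \ref{Rem_Holder_Mixed_Lebesgue_Var}. Two small imprecisions: the spaces $\mathcal{L}^{p(\cdot)}_{\mathfrak{p}}$ are intersections $L^{p(\cdot)}\cap L^{\mathfrak{p}}$ rather than Morrey spaces, and in the bilinear estimate one cannot simply commute the (nonlocal, hence non-pointwise) Leray projector past the pointwise kernel bound --- the paper instead works with the kernel $K^\alpha_t$ of the composed operator $e^{-t(-\Delta)^{\alpha}}\mathbb{P}\,div(\cdot)$, whose decay $|K^\alpha_t(x)|\leq C(t^{\frac{1}{2\alpha}}+|x|)^{-4}$ is recalled in Remark \ref{remark_kernek_K}.
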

Note that, in this theorem we have considered the \emph{mixed variable Lebesgue spaces} $\mathcal{L}^{p(\cdot)}_{\mathfrak{p}}$ (see Subsection \ref{Secc_Notaciones_Presentaciones} for the definition of these spaces) by merely technical reasons and it is motivated by the lack of flexibility in the parameters that intervene in the boundedness of the Riesz transforms involved in its proof. See Subsection \ref{Secc_Notaciones_Presentaciones} and  Remark \ref{Rem_Riesz_MixedLebesgue} below for more details on this particular issue.\\

We finish this section by remarking the fact that the  Theorems \ref{Theoreme_1} and \ref{Theoreme_2}  recently presented, extends the results obtained in \cite{MR4719441} to the case of the fractional Navier-Stokes equations \eqref{NS_Intro}. In fact, by considering $\alpha=1$, we recover the classical Navier-Stokes equations and thus the results in the mentioned paper.

\subsection*{Organization of the paper}
The present paper is structured as follows.
In Section \ref{Secc_resentaciones} we present a concise review of the main definitions and properties of the  Variable Lebesgue spaces $L^{p(\cdot)}$ and  fractional heats kernels. Section \ref{Secc_Proof_Existence} is devoted to the proof of the theorems.

\section{Preliminaries}\label{Secc_resentaciones}

To keep this paper reasonably self-contained, several results and definitions on variable Lebesgue spaces and fractional heat kernels are recalled. 
Thus, we begin by presenting a brief summary on the key elements of variable Lebesgue spaces theory. 
\subsection{Variable Lebesgue spaces}\label{Secc_Notaciones_Presentaciones}
We start this subsection by considering some basic conventions. 
For the sake of simplicity in the rest of the paper we will assume $1<p^-\leq p^+<+\infty$.
On the other hand, in order of differentiate between variable and constant exponents, we will always denote variable exponent by $p(\cdot)$.\\

Note that, the spaces $L^{p(\cdot)}(\mathbb{R}^n)$  are  Banach function spaces and they have very interesting features. 
Thus, we start by presenting the generalization of the H\"older inequalities in this setting. 
\begin{Lemme}
    Let $p(\cdot),\,q(\cdot),\,r(\cdot)\in \mathcal{P}(\mathbb{R}^n)$ be functions such that we have the pointwise relationship
$\frac{1}{p(x)}=\frac{1}{q(x)}+\frac{1}{r(x)}$, $x\in \mathbb{R}^n$. Then there exists a constant $C>0$ such that for all $f\in L^{q(\cdot)}(\mathbb{R}^n) $ and $g \in L^{r(\cdot)}(\mathbb{R}^n)$, the pointwise product $fg$ belongs to the space $L^{p(\cdot)}(\mathbb{R}^n)$ and we have the estimate
\begin{equation}\label{Holder_LebesgueVar}
\|f g\|_{L^{p(\cdot)}} \leq C\|f\|_{L^{q(\cdot)}}\|g\|_{L^{r(\cdot)}}.
 \end{equation}
\end{Lemme}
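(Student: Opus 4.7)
The plan is to mimic the classical proof of H\"older's inequality, with the modulator function $\varrho_{p(\cdot)}$ playing the role of the $p$-th power of the $L^p$-norm, and then convert the modular estimate into a norm estimate through the Luxemburg definition.

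First, by the absolute homogeneity of $\|\cdot\|_{L^{p(\cdot)}}$, I would reduce to the normalized case $\|f\|_{L^{q(\cdot)}} = \|g\|_{L^{r(\cdot)}} = 1$. In this situation, the very definition \eqref{Def_LuxNormLebesgue} of the Luxemburg norm, together with Fatou's lemma applied as $\lambda \downarrow 1$, yields the modular bounds $\varrho_{q(\cdot)}(f) \le 1$ and $\varrho_{r(\cdot)}(g) \le 1$. The pointwise relation $\tfrac{1}{p(x)} = \tfrac{1}{q(x)} + \tfrac{1}{r(x)}$ translates into $\tfrac{p(x)}{q(x)} + \tfrac{p(x)}{r(x)} = 1$, which is exactly the convex-combination condition needed to invoke Young's inequality with two pointwise-varying weights.

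The core step is then to apply the weighted AM--GM inequality $a^{\alpha}b^{\beta} \le \alpha a + \beta b$ (valid for $a,b \ge 0$ and $\alpha + \beta = 1$) with the choice $a = |f(x)|^{q(x)}$, $b = |g(x)|^{r(x)}$, $\alpha = p(x)/q(x)$ and $\beta = p(x)/r(x)$. This yields pointwise
\begin{equation*}
|f(x)\,g(x)|^{p(x)} \le \frac{p(x)}{q(x)}|f(x)|^{q(x)} + \frac{p(x)}{r(x)}|g(x)|^{r(x)} \le |f(x)|^{q(x)} + |g(x)|^{r(x)},
\end{equation*}
where in the last step I use that $p(x) \le q(x)$ and $p(x) \le r(x)$ (forced by the additivity of reciprocals). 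Integrating over $\mathbb{R}^n$ and invoking the normalized modular bounds gives $\varrho_{p(\cdot)}(fg) \le 2$.

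Finally I have to translate this modular bound into a norm bound, which is the only step where the variable-exponent setting behaves differently from the classical case, since $\varrho_{p(\cdot)}$ and $\|\cdot\|_{L^{p(\cdot)}}$ are no longer linked by a single power. Choosing $\lambda = 2^{1/p^{-}}$ and using that $\lambda^{p(x)} \ge \lambda^{p^{-}} = 2$ pointwise, I obtain $\varrho_{p(\cdot)}(fg/\lambda) \le \lambda^{-p^{-}} \varrho_{p(\cdot)}(fg) \le 1$, so by definition of the Luxemburg norm $\|fg\|_{L^{p(\cdot)}} \le 2^{1/p^{-}}$. Undoing the normalization yields the claimed inequality with constant $C = 2^{1/p^{-}}$. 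The main obstacle, and really the only delicate point, is precisely this last passage from modular to norm: one must be careful that the standing assumption $1 < p^{-} \le p^{+} < +\infty$ ensures both that $p^{-}>0$ (so $\lambda$ is finite) and that the modular-norm comparison is uniform, whereas for unbounded $p(\cdot)$ additional care would be required.
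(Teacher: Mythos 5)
Your proof is correct. The paper itself does not prove this lemma but defers to the cited references (Cruz-Uribe--Fiorenza, Section~2.4, and Diening et al., Section~3.2), and your argument --- normalize by homogeneity, pass from the Luxemburg norm to the modular via the unit-ball property, apply pointwise weighted Young/AM--GM using $\frac{p(x)}{q(x)}+\frac{p(x)}{r(x)}=1$, and convert the resulting modular bound $\varrho_{p(\cdot)}(fg)\leq 2$ back to a norm bound using $p^{-}\geq 1$ --- is essentially the standard proof given there.
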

As is natural, this estimate can be easily generalized to vector fields $\vf, \vg :\mathbb{R}^n\longrightarrow \mathbb{R}^n$ and to the product $\vf\cdot \vg$. For a  proof of this result we recommend to the interested reader to \cite[Section 2.4]{Cruz_Libro} or \cite[Section 3.2]{Diening_Libro}.\\

Note that the quantity $\|\cdot\|_{L^{p(\cdot)}}$ satisfies the \emph{Norm conjugate formula} given in \cite[Corollary 3.2.14]{Diening_Libro}:

\begin{Proposition}
    Let $p(\cdot),\,q(\cdot),\,r(\cdot)\in \mathcal{P}(\mathbb{R}^n)$ be functions such that we have the pointwise relationship
$1=\frac{1}{p(x)}+\frac{1}{q(x)}$, $x\in \mathbb{R}^n$. 
Then, for all  $f\in L^{p(\cdot)}$ we have
\begin{equation}\label{Norm_conjugate_formula}
\|f\|_{L^{p(\cdot)}}
\leq
\underset{\|g\|_{L^{p'(\cdot)}}\leq 1}{\sup}\int_{\mathbb{R}^n}|f(x)||g(x)|dx.
\end{equation}
\end{Proposition}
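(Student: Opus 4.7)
The plan is to establish the inequality by constructing an explicit near-extremal test function $g$ and reducing the claim to the pointwise conjugate-exponent identity $(p(x)-1)p'(x)=p(x)$, which is the only algebraic input needed. The Hölder inequality \eqref{Holder_LebesgueVar} goes the other way and will not be used.

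First I would fix a nonzero $f\in L^{p(\cdot)}(\mathbb{R}^n)$ and set $\lambda_0:=\|f\|_{L^{p(\cdot)}}\in (0,+\infty)$. The crucial preliminary step is to prove the \emph{unit-modular} property
$$\varrho_{p(\cdot)}(f/\lambda_0)=1.$$
The bound $\leq 1$ is immediate from the definition \eqref{Def_LuxNormLebesgue} together with monotone convergence as $\lambda\downarrow \lambda_0$. The reverse bound uses that $\varrho_{p(\cdot)}(f/\lambda)>1$ for every $\lambda<\lambda_0$ (by the definition of the infimum), together with the pointwise domination
$$|f(x)/\lambda|^{p(x)}\leq (\lambda_0/\lambda)^{p^+}\,|f(x)/\lambda_0|^{p(x)},\qquad 0<\lambda<\lambda_0,$$
which makes the family $\{|f/\lambda|^{p(\cdot)}\}_{\lambda \in (\lambda_0/2,\lambda_0)}$ uniformly integrable; this is the place where the standing assumption $p^+<+\infty$ enters in an essential way. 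The dominated convergence theorem then yields $\varrho_{p(\cdot)}(f/\lambda)\to \varrho_{p(\cdot)}(f/\lambda_0)$ as $\lambda\uparrow \lambda_0$, hence $\varrho_{p(\cdot)}(f/\lambda_0)\geq 1$.

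With this identity in hand, I would define the test function
$$g(x):=\operatorname{sgn}(f(x))\,\left(\frac{|f(x)|}{\lambda_0}\right)^{p(x)-1},\qquad x\in\mathbb{R}^n.$$
Using the conjugate identity $(p(x)-1)\,p'(x)=p(x)$ one finds
$$\varrho_{p'(\cdot)}(g)=\int_{\mathbb{R}^n}\left(\frac{|f(x)|}{\lambda_0}\right)^{p(x)}dx=\varrho_{p(\cdot)}(f/\lambda_0)=1,$$
so $\|g\|_{L^{p'(\cdot)}}\leq 1$ and $g$ is admissible in the supremum on the right-hand side of \eqref{Norm_conjugate_formula}. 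A direct evaluation then gives
$$\int_{\mathbb{R}^n} |f(x)|\,|g(x)|\,dx=\int_{\mathbb{R}^n}\frac{|f(x)|^{p(x)}}{\lambda_0^{p(x)-1}}\,dx=\lambda_0\,\varrho_{p(\cdot)}(f/\lambda_0)=\lambda_0=\|f\|_{L^{p(\cdot)}},$$
which produces the required lower bound on the supremum and concludes the argument.

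The main technical obstacle in this approach is the equality $\varrho_{p(\cdot)}(f/\lambda_0)=1$, and not merely the inequality $\leq 1$ that one reads off from \eqref{Def_LuxNormLebesgue}. It is precisely at this point that the standing assumption $p^+<+\infty$ is irreplaceable, since it is what guarantees the continuity of the map $\lambda\mapsto \varrho_{p(\cdot)}(f/\lambda)$ at $\lambda_0$ and therefore the attainment of the Luxemburg infimum; without it, the computation above would only yield \eqref{Norm_conjugate_formula} up to a multiplicative constant.
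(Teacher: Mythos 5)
Your argument is correct and complete. The paper itself gives no proof of this proposition: it simply cites \cite[Corollary 3.2.14]{Diening_Libro}, where the norm conjugate formula is derived from the associate-space (duality) machinery and is usually stated as a two-sided estimate with a multiplicative constant. What you give instead is the classical direct argument: (i) the attainment of the Luxemburg infimum, i.e.\ $\varrho_{p(\cdot)}(f/\lambda_0)=1$ for $\lambda_0=\|f\|_{L^{p(\cdot)}}\neq 0$, proved correctly by monotone convergence from above and by dominated convergence from below with the dominating function $2^{p^+}|f/\lambda_0|^{p(\cdot)}$ (this is where the standing assumption $1<p^-\leq p^+<+\infty$ from Section 2.1 enters, exactly as you say); and (ii) the explicit near-extremal test function $g=(|f|/\lambda_0)^{p(\cdot)-1}$, whose admissibility follows from $(p(x)-1)p'(x)=p(x)$ and whose pairing with $|f|$ evaluates to $\lambda_0$. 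This buys the sharp constant $1$ in the lower bound, which is precisely the form stated in the proposition, and it is entirely self-contained, whereas the cited corollary is embedded in a longer development of associate spaces. Two cosmetic remarks: the factor $\operatorname{sgn}(f(x))$ in your definition of $g$ is superfluous, since the right-hand side of \eqref{Norm_conjugate_formula} only involves $|g|$ (and $f$ is in fact vector-valued in this paper, so one should simply take $g=(|f|/\lambda_0)^{p(\cdot)-1}\geq 0$); and the exponent denoted $q(\cdot)$ in the hypothesis is the same as the $p'(\cdot)$ appearing in the supremum, a notational slip of the paper rather than of your proof. The trivial case $f=0$ is, as you note, immediate.
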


\begin{Remarque}
Note that in the previous notions the space $\mathbb{R}^n$ can be replaced by an interval $[0,T]$. 
\end{Remarque}
To continue  we present the following embedding result \cite[Corollary 2.48]{Cruz_Libro}.
\begin{Lemme}\label{lemme_embeding}
Let consider $n\geq 1$, a bounded domain $\mathcal{X}\subset \mathbb{R}^n$ and two variable exponents  $q_1(\cdot),  q_2(\cdot)\in \mathcal{P}(\mathcal{X})$ such that $1< q_1^+, \ q_2^+ <+\infty$.  Then, $L^{q_2(\cdot)}(\mathcal{X})\subset L^{q_1(\cdot)}(\mathcal{X})$ if and only if $q_1(x)\leq q_2(x)$ almost everywhere. Furthermore, in this case we have that  
$$\|f\|_{L^{q_1(\cdot)} } \leq\left(1+\left|  \mathcal{X}\right|\right)\|f\|_{L^{q_2(\cdot)}}.$$
\end{Lemme}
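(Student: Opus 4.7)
The plan is to prove the two directions of the equivalence separately; the forward direction will also supply the quantitative norm bound, while the reverse is an argument by contrapositive.

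For the sufficient direction, assume $q_1(x) \leq q_2(x)$ a.e.\ and introduce the auxiliary variable exponent $s(\cdot)\in\mathcal{P}(\mathcal{X})$ defined by the pointwise identity
$$\frac{1}{s(x)}=\frac{1}{q_1(x)}-\frac{1}{q_2(x)}\geq 0,$$
so that $\frac{1}{q_1(x)}=\frac{1}{q_2(x)}+\frac{1}{s(x)}$. The generalized H\"older inequality \eqref{Holder_LebesgueVar} applied to the trivial factorization $f=f\cdot\chi_{\mathcal{X}}$ then yields
$$\|f\|_{L^{q_1(\cdot)}(\mathcal{X})}\leq C\,\|f\|_{L^{q_2(\cdot)}(\mathcal{X})}\,\|\chi_{\mathcal{X}}\|_{L^{s(\cdot)}(\mathcal{X})}.$$
The remaining step is to compute the Luxemburg norm of the indicator $\chi_{\mathcal{X}}$. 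Testing $\lambda=1+|\mathcal{X}|\geq 1$ in definition \eqref{Def_LuxNormLebesgue} and using $s(x)\geq q_1(x)\geq 1$ (which follows from $1/s(x)\leq 1/q_1(x)$), one gets $(1/\lambda)^{s(x)}\leq 1/\lambda$, whence
$$\int_{\mathcal{X}}(1/\lambda)^{s(x)}\,dx \leq \frac{|\mathcal{X}|}{1+|\mathcal{X}|}\leq 1,$$
so $\|\chi_{\mathcal{X}}\|_{L^{s(\cdot)}}\leq 1+|\mathcal{X}|$ and the embedding estimate follows (absorbing the H\"older constant, or appealing to the sharp version of \eqref{Holder_LebesgueVar} which carries constant $1$ when one factor is an indicator).

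For the necessary direction, I would argue by contrapositive. Suppose the set $A=\{x\in\mathcal{X}:q_1(x)>q_2(x)\}$ has positive Lebesgue measure. A countable exhaustion then provides rationals $p_2<p_1$ and a measurable subset $A_0\subset A$ with $|A_0|>0$ on which $q_2(x)\leq p_2<p_1\leq q_1(x)$. The forward direction specialized to constant exponents on $A_0$ gives continuous embeddings $L^{q_2(\cdot)}(A_0)\hookrightarrow L^{p_2}(A_0)$ and $L^{p_1}(A_0)\hookrightarrow L^{q_1(\cdot)}(A_0)$. Since the classical strict inclusion $L^{p_1}(A_0)\subsetneq L^{p_2}(A_0)$ holds on a set of finite positive measure (for instance via a truncation of $|x-x_0|^{-n/p_1}$ centered at a density point $x_0$ of $A_0$), there exists $f\in L^{p_2}(A_0)\setminus L^{p_1}(A_0)$; its zero extension to $\mathcal{X}$ then belongs to $L^{q_2(\cdot)}(\mathcal{X})$ but not to $L^{q_1(\cdot)}(\mathcal{X})$, contradicting the assumed embedding.

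The main obstacle I anticipate is the degenerate case where $q_1(x)=q_2(x)$ on a set of positive measure, which forces $s(x)=+\infty$ there. Handling this cleanly requires either extending the modular $\varrho_{s(\cdot)}$ and inequality \eqref{Holder_LebesgueVar} to admit an $L^\infty$-component on $\{s(x)=\infty\}$, or splitting $\mathcal{X}$ into the equality set and the strict-inequality set and treating each separately before recombining. Both routes are standard in the variable-exponent literature (see \cite{Cruz_Libro,Diening_Libro}) and introduce no new ideas, but they are the technical point to spell out carefully for a fully rigorous argument.
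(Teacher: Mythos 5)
First, note that the paper does not actually prove this lemma: it is quoted verbatim from \cite[Corollary 2.48]{Cruz_Libro} with no argument given, so your proposal can only be measured against the standard textbook proof. Your sufficiency argument via H\"older with the auxiliary exponent $\frac{1}{s(\cdot)}=\frac{1}{q_1(\cdot)}-\frac{1}{q_2(\cdot)}$ and the computation $\|\chi_{\mathcal{X}}\|_{L^{s(\cdot)}}\leq 1+|\mathcal{X}|$ is a legitimate route, and your estimate of the indicator's Luxemburg norm is correct. But it is weaker and heavier than the standard argument in exactly the two respects you flag: (i) the generalized H\"older inequality \eqref{Holder_LebesgueVar} carries a constant $C>1$ that does not disappear when one factor is an indicator (the Luxemburg norm is not multiplicative), so you only obtain $C(1+|\mathcal{X}|)$ rather than the stated constant; (ii) $s(\cdot)$ takes the value $+\infty$ on the set $\{q_1=q_2\}$, which falls outside the class $\mathcal{P}$ as defined in this paper, so \eqref{Holder_LebesgueVar} does not apply as stated. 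Both issues evaporate if one estimates the modular directly: normalizing $\|f\|_{L^{q_2(\cdot)}}\leq 1$ and testing $\lambda=1+|\mathcal{X}|$ in \eqref{Def_LuxNormLebesgue}, the pointwise inequality $|f|^{q_1(x)}\leq |f|^{q_2(x)}+1$ together with $\lambda^{-q_1(x)}\leq \lambda^{-1}$ gives $\varrho_{q_1(\cdot)}(f/\lambda)\leq \lambda^{-1}\left(\varrho_{q_2(\cdot)}(f)+|\mathcal{X}|\right)\leq 1$, which is the entire proof of the embedding with the exact constant $1+|\mathcal{X}|$ and with no auxiliary exponent at all.

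Second, in the necessity direction the two embeddings are written backwards. On $A_0$ you have arranged $q_2(x)\leq p_2<p_1\leq q_1(x)$, so the forward direction yields $L^{p_2}(A_0)\hookrightarrow L^{q_2(\cdot)}(A_0)$ and $L^{q_1(\cdot)}(A_0)\hookrightarrow L^{p_1}(A_0)$ --- the reverses of what you wrote --- and it is precisely these (correct) inclusions that your witness $f\in L^{p_2}(A_0)\setminus L^{p_1}(A_0)$ needs in order to lie in $L^{q_2(\cdot)}(\mathcal{X})$ and outside $L^{q_1(\cdot)}(\mathcal{X})$. The exhaustion over rational pairs and the density-point construction of $f$ are fine. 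So the architecture of the contrapositive is sound, but as written the chain of implications does not close; once the arrows are swapped it does.
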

It is important to emphasize that the convolution product $f\ast g$ is not well adapted to the structure of the $L^{p(\cdot)}$ spaces, in particular the Young inequalities for convolution are not valid anymore (see \cite[Section 3.6]{Diening_Libro}) and thus many of the usual operators that appear in PDEs must be treated very carefully. Note also that Fourier-based methods are not so easy to use as we lack of an alternative for the Plancherel formula.\\

In order to study the boudedness of such operators is classical to impose some conditions on the variable exponents $p(\cdot)$. Thus, 
with the condition $p(\cdot)\in \mathcal{P}^{log}(\mathbb{R}^n)$, we can state  the following definition and theorem.

\begin{Definition}
Let $f:\mathbb{R}^n\longrightarrow \mathbb{R}$ a locally integrable function. The Hardy-Littlewood maximal function $\mathcal{M}$ is defined by 
$$\displaystyle{\mathcal{M}(f)(x)=\underset{B \ni x}{\sup } \;\frac{1}{|B|}\int_{B }|f(y)|dy}$$ where $B$ is an open ball of $\mathbb{R}^n$.
\end{Definition}

\begin{Theoreme}\label{MaximalFunc_LebesgueVar}
    Let  $p(\cdot)\in \mathcal{P}^{log}(\mathbb{R}^n)$ with $p^->1$. Then, there exists a constant $C>0$ such that
\begin{equation}
\|\mathcal{M}(f)\|_{L^{p(\cdot)}}\leq C \|f\|_{L^{p(\cdot)}}.
\end{equation}
\end{Theoreme}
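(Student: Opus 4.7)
The plan is to reduce the norm inequality to a modular inequality and then establish the latter via a decomposition of $f$ combined with a pointwise estimate of $\mathcal{M}$ that exploits the log-Hölder regularity. By the equivalence between the Luxemburg norm \eqref{Def_LuxNormLebesgue} and the modular $\varrho_{p(\cdot)}$, it suffices to exhibit a constant $C>0$ such that $\varrho_{p(\cdot)}(f)\leq 1$ implies $\varrho_{p(\cdot)}(\mathcal{M}(f)/C)\leq 1$.

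First I would split $f=f_1+f_2$, where $f_1=f\chi_{\{|f|>1\}}$ and $f_2=f\chi_{\{|f|\leq 1\}}$, so that $\mathcal{M}(f)\leq \mathcal{M}(f_1)+\mathcal{M}(f_2)$. The large part is straightforward: the pointwise bound $|f_1|^{p^-}\leq |f_1|^{p(\cdot)}$ together with $\varrho_{p(\cdot)}(f)\leq 1$ gives $f_1\in L^{p^-}(\Rn)$, and since $p^->1$ the classical Hardy-Littlewood theorem provides the required $L^{p^-}$ control, which in turn bounds the corresponding portion of the modular.

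The core of the argument, and what I expect to be the main obstacle, is a pointwise estimate for the small part of the form
$$\mathcal{M}(f_2)(x)^{p(x)}\leq C\,\mathcal{M}\bigl(|f_2|^{p(\cdot)}\bigr)(x)+C(e+|x|)^{-np^-}.$$
The log-Hölder hypothesis $p(\cdot)\in\mPl(\Rn)$ enters here in two ways: the local condition lets one absorb the factor $r^{p(x)-p(y)}$ arising when $|f_2|\leq 1$ is averaged over a ball $B(x,r)$, since $r^{p(x)-p(y)}\leq C$ uniformly for $y\in B(x,r)$; the decay condition at infinity plays the analogous role by replacing $p(x)$ with $p_\infty$ whenever the ball sits far from the origin. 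Making this rigorous requires a careful case split based on the size of $r$ relative to $1/(e+|x|)$ and on whether $|x|$ is large, and this is where the bulk of the technical work lies.

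Once the pointwise estimate is available, the conclusion follows by integration: $|f_2|^{p(\cdot)}\in L^{1}(\Rn)$ by the modular hypothesis, so applying the classical $L^{p^-}$ boundedness of $\mathcal{M}$ to an appropriate power and noting that $(e+|x|)^{-np^-}\in L^{1}(\Rn)$ (using $p^->1$) gives control of $\int \mathcal{M}(f_2)^{p(x)}\,dx$. Combining with the bound for $\mathcal{M}(f_1)$ and returning from the modular to the Luxemburg norm yields the stated inequality.
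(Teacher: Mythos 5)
The paper does not actually prove Theorem \ref{MaximalFunc_LebesgueVar}; it is quoted from the literature with a pointer to \cite[Section 4.3]{Diening_Libro}, so there is no in-paper argument to compare yours with. Your skeleton is the standard Diening/Cruz-Uribe--Fiorenza--Neugebauer strategy (reduction to a modular inequality, splitting at height $1$, a pointwise estimate driven by the two log-H\"older conditions), and that is the right skeleton. But two of the steps, as you have written them, would fail. The main one concerns the small part: the inequality $\mathcal{M}(f_2)(x)^{p(x)}\leq C\,\mathcal{M}\bigl(|f_2|^{p(\cdot)}\bigr)(x)+C(e+|x|)^{-np^-}$ cannot be integrated over $\Rn$. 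The function $|f_2|^{p(\cdot)}$ is merely in $L^1$, and the maximal function of a nonzero $L^1$ function is never integrable (it is bounded below by $c|x|^{-n}$ at infinity); already for $f_2=\chi_{B(0,1)}$ and constant exponent $p>1$ the left-hand side decays like $|x|^{-np}$ while the first term on the right decays only like $|x|^{-n}$. The version of the estimate that actually closes the argument is $\mathcal{M}(f_2)(x)^{p(x)}\leq C\,\mathcal{M}\bigl(|f_2|^{p(\cdot)/p^-}\bigr)(x)^{p^-}+C(e+|x|)^{-np^-}$, whose first term integrates to $C\varrho_{p(\cdot)}(f_2)$ by the classical $L^{p^-}$ maximal theorem (this is where $p^->1$ enters for the small part too). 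Your parenthetical ``to an appropriate power'' gestures at this, but it is a genuinely different inequality from the one you commit to proving, and the case analysis must be set up for it from the start.

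The second gap is the claim that the large part is ``straightforward.'' From $\|f_1\|_{L^{p^-}}\leq 1$ and the classical theorem you obtain $\|\mathcal{M}(f_1)\|_{L^{p^-}}\leq C_0$, but this controls $\int_{\Rn}(\mathcal{M}(f_1)(x)/C)^{p(x)}\,dx$ only on the set where $\mathcal{M}(f_1)/C\leq 1$, since there $t^{p(x)}\leq t^{p^-}$; on the complementary set one has $t^{p(x)}\geq t^{p^-}$, possibly as large as $t^{p^+}$, and $\mathcal{M}(f_1)$ need not belong to $L^{p^+}$. In fact, for a merely measurable exponent with $1<p^-<p^+<+\infty$ the modular bound for $\mathcal{M}(f_1)$ is false (concentrate $f_1$ on a small set where $p\equiv p^-$ and evaluate $\mathcal{M}(f_1)$ at comparable distance on a set where $p\equiv p^+$), so the log-H\"older hypothesis must be invoked for $f_1$ as well --- for instance by proving the corrected pointwise estimate above for all $f$ with $\varrho_{p(\cdot)}(f)\leq 1$ rather than only for the part bounded by $1$. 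With those two repairs the outline becomes the standard proof.
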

A proof of this result can be consulted in \cite[Section 4.3]{Diening_Libro}. At his point,  we recall a classical result about the Hardy-Littlewood maximal function (see \cite[Section 2.1]{grafakos2008classical}):
\begin{Lemme}\label{lemme_conv_maximal}
If $\varphi$ is a radially decreasing function on $\mathbb{R}^3$ and $\vf$ is a locally integrable function, then 
\begin{equation*}
|(\varphi\ast\vf)(x)| \leq \|\varphi\|_{L^1}  \mathcal{M} (\vf)(x),
\end{equation*}
where $\mathcal{M}$ is the Hardy-Littlewood maximal function.
\end{Lemme}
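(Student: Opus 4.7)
The plan is to reduce the convolution against $\varphi$ to a superposition of ball-averages and then control each such average by $\mathcal{M}(\vf)(x)$. Without loss of generality I assume $\varphi\geq 0$ (replace $\varphi$ by $|\varphi|$, which remains radially decreasing and has the same $L^1$ norm), and by the usual density argument it suffices to treat $\vf\geq 0$ as well.

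First, I would invoke the layer-cake (Cavalieri) representation:
\begin{equation*}
\varphi(y)=\int_{0}^{\infty} \mathbf{1}_{\{\varphi>t\}}(y)\,dt.
\end{equation*}
The key structural observation is that since $\varphi$ is radially decreasing, each superlevel set $\{\varphi>t\}$ is an open ball $B(0,r(t))$ centered at the origin (possibly empty, or all of $\mathbb{R}^3$). Substituting into the convolution and applying Fubini's theorem (legitimate because everything is nonnegative), I get
\begin{equation*}
(\varphi\ast\vf)(x)=\int_{0}^{\infty}\!\int_{B(0,r(t))}\vf(x-y)\,dy\,dt.
\end{equation*}

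Next, for each fixed $t>0$ the inner integral is a ball-average times the volume of the ball, so by the very definition of the Hardy--Littlewood maximal function,
\begin{equation*}
\int_{B(0,r(t))}\vf(x-y)\,dy=|B(0,r(t))|\cdot\frac{1}{|B(0,r(t))|}\int_{B(x,r(t))}\vf(z)\,dz \leq |B(0,r(t))|\,\mathcal{M}(\vf)(x).
\end{equation*}
Plugging this bound back in and pulling $\mathcal{M}(\vf)(x)$ out of the $t$-integral yields
\begin{equation*}
(\varphi\ast\vf)(x)\leq \mathcal{M}(\vf)(x)\int_{0}^{\infty}|B(0,r(t))|\,dt.
\end{equation*}

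Finally, I would identify the remaining integral. Another application of Fubini, in reverse, gives
\begin{equation*}
\int_{0}^{\infty}|B(0,r(t))|\,dt=\int_{0}^{\infty}\!\int_{\mathbb{R}^3}\mathbf{1}_{\{\varphi>t\}}(y)\,dy\,dt=\int_{\mathbb{R}^3}\varphi(y)\,dy=\|\varphi\|_{L^1},
\end{equation*}
which closes the estimate. There is no real obstacle here; the only point that requires a small amount of care is justifying that the superlevel sets of a radially decreasing function are balls centered at $0$ (true by the monotonicity in $|y|$, modulo a measure-zero set on the boundary sphere, which does not affect the integrals), and the removal of the sign assumption on $\varphi$ and $\vf$ by taking absolute values in the very first step.
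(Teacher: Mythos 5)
Your argument is correct: the paper does not prove this lemma but defers to Grafakos (Section 2.1), and your layer-cake decomposition of $\varphi$ into a superposition of indicators of balls centered at the origin, each controlled by a ball average and hence by $\mathcal{M}(\vf)(x)$, is essentially the same classical argument given there (Grafakos discretizes via radial simple functions, which is the same idea in Riemann-sum form). The only cosmetic points are that no density argument is needed to reduce to $\vf\geq 0$ (just bound $|(\varphi\ast\vf)(x)|\leq(|\varphi|\ast|\vf|)(x)$ pointwise), and that an integrable radially decreasing $\varphi$ is automatically nonnegative, so the first reduction is vacuous.
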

We remark that the usual Riesz transforms $(\mathcal{R}_j)_{1\leq j\leq n}$ defined formally in the Fourier level by $\widehat{\mathcal{R}_j(f)}(\xi)=-\frac{i\xi_j}{|\xi|}\widehat{f}(\xi)$ are also bounded in Lebesgue spaces of variable exponent. This fact is stated in the following Lemma.
\begin{Lemme}
Let $p(\cdot)\in \mathcal{P}^{log}(\mathbb{R}^n)$ and $1<p^-\leq p^+<+\infty$. Then, given $f\in L^{p(\cdot)}$, there exists a constant $C>0$ such that
\begin{equation}\label{Riesz_LebesgueVar}
\|\mathcal{R}_j(f)\|_{L^{p(\cdot)}}\leq C \|f\|_{L^{p(\cdot)}},
\end{equation}
\end{Lemme}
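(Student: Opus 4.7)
The plan is to recognize $\mathcal{R}_j$ as a prototypical Calderón--Zygmund operator (its convolution kernel $K_j(x) = c_n\,x_j/|x|^{n+1}$ is odd, homogeneous of degree $-n$, and smooth away from the origin) and to deduce the $L^{p(\cdot)}$-bound from the already-established boundedness of the Hardy--Littlewood maximal operator $\mathcal{M}$. Two routes are available; the cleanest is the Rubio de Francia extrapolation theorem in the variable-exponent setting (see \cite[Theorem 5.24]{Cruz_Libro} or \cite[Section 7.2]{Diening_Libro}), which transfers weighted-$L^{p_0}$ bounds for $A_{p_0}$-weights to $L^{p(\cdot)}$-bounds whenever $\mathcal{M}$ is bounded on both $L^{p(\cdot)}$ and $L^{p'(\cdot)}$.

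Concretely, I would proceed in three steps. First, recall the classical weighted estimate: for every fixed $p_0\in(1,\infty)$ and every Muckenhoupt weight $w\in A_{p_0}$,
\begin{equation*}
\int_{\mathbb{R}^n}|\mathcal{R}_j f(x)|^{p_0} w(x)\,dx \leq C(n,p_0,[w]_{A_{p_0}})\int_{\mathbb{R}^n}|f(x)|^{p_0} w(x)\,dx,
\end{equation*}
which is the standard Coifman--Fefferman bound for Calderón--Zygmund operators. Second, invoke the extrapolation theorem: it asserts that a family of operators satisfying uniform $A_{p_0}$-weighted bounds for one (hence every) $p_0\in(1,\infty)$ extends continuously to $L^{p(\cdot)}(\mathbb{R}^n)$, provided $\mathcal{M}$ is bounded on $L^{p(\cdot)}$ and on $L^{p'(\cdot)}$. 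Third, verify these two maximal bounds: the hypothesis $p(\cdot)\in\mathcal{P}^{\log}(\mathbb{R}^n)$ with $p^->1$ gives the first via Theorem~\ref{MaximalFunc_LebesgueVar}; for the second, note that $p'(\cdot)=p(\cdot)/(p(\cdot)-1)$ also belongs to $\mathcal{P}^{\log}(\mathbb{R}^n)$ (the log-Hölder condition is stable under the map $t\mapsto 1/t$ applied to $1/p(\cdot)$), and the hypothesis $p^+<+\infty$ forces $(p')^- = p^+/(p^+-1) > 1$, so Theorem~\ref{MaximalFunc_LebesgueVar} applies once more. Combining the three steps yields the claimed inequality.

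An alternative, more self-contained, route would be a Cotlar-type pointwise estimate
\begin{equation*}
\mathcal{R}_j^* f(x) \leq C\bigl(\mathcal{M}(|\mathcal{R}_j f|^s)(x)\bigr)^{1/s} + C\,\mathcal{M}(f)(x), \qquad 0<s<1,
\end{equation*}
followed by an iteration argument, but this still ultimately relies on Theorem~\ref{MaximalFunc_LebesgueVar} and does not simplify the presentation.

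The main obstacle is not a difficult computation but a careful invocation of the extrapolation machinery: one must ensure that the variable exponent $p(\cdot)$ is admissible for \emph{both} $\mathcal{M}$ and its dual maximal operator, which is precisely where the two-sided restriction $1<p^-\leq p^+<+\infty$ becomes essential (as already emphasised in the paper's assumptions). A secondary subtlety, already highlighted in the excerpt, is that a direct Fourier-analytic proof using Plancherel-type identities is unavailable in the variable-exponent setting, so the real-variable (singular integral) description of $\mathcal{R}_j$ via its principal-value kernel is the only workable framework.
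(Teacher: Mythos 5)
The paper does not actually prove this lemma: it simply cites \cite[Sections 6.3 and 12.4]{Diening_Libro}, where the result is established by essentially the route you describe, namely viewing $\mathcal{R}_j$ as a Calder\'on--Zygmund operator and transferring the Muckenhoupt-weighted $L^{p_0}(w)$ bounds to $L^{p(\cdot)}$ via Rubio de Francia extrapolation, which in turn rests on the boundedness of $\mathcal{M}$ on $L^{p(\cdot)}$ and $L^{p'(\cdot)}$ guaranteed by $p(\cdot)\in\mathcal{P}^{\log}$ and $1<p^-\leq p^+<+\infty$. Your argument is therefore correct and matches the standard proof the paper relies on; the only cosmetic slip is the parenthetical justification that $p'(\cdot)\in\mathcal{P}^{\log}(\mathbb{R}^n)$ --- since $\frac{1}{p'(\cdot)}=1-\frac{1}{p(\cdot)}$ and the log-H\"older conditions are stated for $\frac{1}{p(\cdot)}$, the relevant invariance is under $t\mapsto 1-t$, not $t\mapsto 1/t$ (the conclusion is of course unaffected).
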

A proof of this result can be consulted in \cite[Sections 6.3 and 12.4]{Diening_Libro}. To continue, we introduce the following classical operator. 
\begin{Definition}\label{Definition_RieszPotential}
Let $0<\beta<n$.  Then, given a measurable function $f$, we define the Riesz potential operator $\mathcal{I}_\beta (f):\R^n \to [0,+\infty]$  by 
\begin{equation}\label{eq.Definition_RieszPotential}
\mathcal{I}_\beta(f)(x):=\int_{\mathbb{R}^n}\frac{|f(y)|}{|x-y|^{n-\beta}}dy. 
\end{equation}
\end{Definition}

The Riesz potential operator is bounded in variable Lebesgue spaces if we consider appropriate variable exponents in the class $\mathcal{P}^{log}(\mathbb{R}^n)$. A precise statement of this result reads as follows. 

\begin{Theoreme}\label{theo.PotentialRieszVariable0}
Let  $p(\cdot)\in \mathcal{P}^{log}(\mathbb{R}^n)$ and $0<\beta<n/p^+$. Then, there exists $C>0$ such that 
\begin{equation}\label{PotentialRieszVariable0}
\|\mathcal{I}_\beta(f)\|_{L^{q(\cdot)}}\leq C\|f\|_{L^{p(\cdot)}},\qquad \mbox{with} \quad  \frac{1}{q(\cdot)}=\frac{1}{p(\cdot)}-\frac{\beta}{n}.
\end{equation}
\end{Theoreme}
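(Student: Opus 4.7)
The plan is to reduce the weighted $L^{p(\cdot)} \to L^{q(\cdot)}$ estimate for $\mathcal{I}_\beta$ to a pointwise \emph{Hedberg-type} inequality that only involves the Hardy--Littlewood maximal function, and then to invoke Theorem \ref{MaximalFunc_LebesgueVar}. Concretely, the target pointwise bound I would aim for is something of the form
\begin{equation*}
\mathcal{I}_\beta(f)(x) \leq C\, \mathcal{M}(f)(x)^{p(x)/q(x)} \left( \|f\|_{L^{p(\cdot)}} + 1 \right)^{1 - p(x)/q(x)},
\end{equation*}
valid for any $f$ with $\|f\|_{L^{p(\cdot)}} \leq 1$ (after which an easy scaling/normalisation reduces the general case to this one). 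If this bound is in hand, then raising both sides to the power $q(x)$ and integrating gives $\varrho_{q(\cdot)}(\mathcal{I}_\beta f) \lesssim \varrho_{p(\cdot)}(\mathcal{M} f) + 1 \lesssim 1$ by Theorem \ref{MaximalFunc_LebesgueVar}, and the conclusion \eqref{PotentialRieszVariable0} follows from the definition \eqref{Def_LuxNormLebesgue} of the Luxemburg norm.

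The Hedberg inequality itself is obtained by splitting the Riesz potential according to a radius $R>0$ to be chosen later:
\begin{equation*}
\mathcal{I}_\beta(f)(x) = \int_{|x-y|<R}\frac{|f(y)|}{|x-y|^{n-\beta}}dy + \int_{|x-y|\geq R}\frac{|f(y)|}{|x-y|^{n-\beta}}dy =: A(x) + B(x).
\end{equation*}
For $A(x)$, a dyadic decomposition of the ball and Lemma \ref{lemme_conv_maximal} (applied to the radially decreasing truncation of $|\cdot|^{-(n-\beta)}$) give the classical bound $A(x) \leq C R^{\beta}\, \mathcal{M}(f)(x)$. For $B(x)$, I would apply the generalised H\"older inequality \eqref{Holder_LebesgueVar} between $L^{p(\cdot)}$ and $L^{p'(\cdot)}$, reducing the question to controlling $\|\mathbf{1}_{|x-\cdot|\geq R} |x-\cdot|^{-(n-\beta)}\|_{L^{p'(\cdot)}}$ by $C R^{\beta - n/p(x)}$. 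This last step is where the assumption $p(\cdot)\in \mathcal{P}^{\log}(\mathbb{R}^n)$ is essential and is also the main obstacle: one cannot simply ``pull the exponent out'' as in the constant case. Instead, one uses the log-H\"older continuity to show that on each dyadic annulus $\{2^k R \leq |x-y| < 2^{k+1}R\}$ the exponent $p(y)$ is close enough to $p(x)$ that the modular integral behaves as if the exponent were constant equal to $p(x)$, up to a multiplicative constant. The decay-at-infinity condition in the definition of $\mathcal{P}^{\log}$ handles the unbounded range, while the local log-H\"older condition handles small scales; together they yield precisely $\|B(x)\|_{L^{p'(\cdot)}(\{|x-\cdot|\geq R\})} \leq C R^{\beta - n/p(x)}\,(\text{harmless correction})$, provided $\beta - n/p(x) < 0$, which is the role played by the hypothesis $\beta < n/p^+$.

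Adding the two estimates gives $\mathcal{I}_\beta(f)(x) \leq C\bigl( R^{\beta} \mathcal{M}(f)(x) + R^{\beta - n/p(x)} \|f\|_{L^{p(\cdot)}}\bigr)$, and the choice $R = (\|f\|_{L^{p(\cdot)}}/\mathcal{M}(f)(x))^{p(x)/n}$ equalises the two terms and produces the desired Hedberg inequality with exponent $p(x)/q(x) = 1 - \beta p(x)/n$. Combining with the boundedness of $\mathcal{M}$ on $L^{p(\cdot)}$ finishes the proof. The delicate point throughout is the passage from a variable exponent inside an integrand to an effective constant exponent on each scale; this is exactly the content of the $\mathcal{P}^{\log}$ class, and any slackening of these regularity hypotheses would derail the argument.
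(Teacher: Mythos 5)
The paper does not actually prove this theorem: it states it and refers the reader to \cite[Section 6.1]{Diening_Libro}, so there is no in-text argument to compare against. Your sketch is the standard Hedberg-type proof that one finds in the cited references (it is closest in spirit to the treatment in \cite{Cruz_Libro}; the proof in \cite{Diening_Libro} runs through their ``key estimate'' for averaging operators, but the skeleton --- pointwise domination by a power of $\mathcal{M}f$ plus boundedness of $\mathcal{M}$ on $L^{p(\cdot)}$ --- is the same). The structure is sound: the normalisation $\|f\|_{L^{p(\cdot)}}\leq 1$, the near/far splitting, $A(x)\leq CR^{\beta}\mathcal{M}(f)(x)$, the optimisation in $R$, and the final modular argument (raise to the power $q(x)$, use $q^+<+\infty$, which follows from $\beta<n/p^+$, to absorb constants, then conclude via the unit-ball property of the Luxemburg norm) are all correct. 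The one place where your write-up is optimistic rather than wrong is the tail estimate: the clean bound $\|\mathbf{1}_{\{|x-\cdot|\geq R\}}|x-\cdot|^{-(n-\beta)}\|_{L^{p'(\cdot)}}\leq CR^{\beta-n/p(x)}$ is not attainable uniformly in $R$ for a genuinely variable exponent; what the $\mathcal{P}^{\log}$ hypotheses give is this bound up to an additive error (typically of the form $C(e+|x|)^{-N}$, produced by the decay condition at infinity when $R$ is large, i.e.\ when $\mathcal{M}(f)(x)$ is small), and this error must be carried through the Hedberg inequality as an extra summand whose $q(\cdot)$-modular is finite. You flag this as a ``harmless correction,'' which is the right instinct, but a complete proof has to exhibit that term explicitly and check its integrability; with that caveat your argument matches the standard proof of the result the paper is quoting.
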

A proof of this theorem  can be consulted in
\cite[Section 6.1]{Diening_Libro}.

Note that the estimate \eqref{PotentialRieszVariable0} introduces a very strong relationship between the variable exponents $p(\cdot)$ and $q(\cdot)$. Thus, in order  to obtain  more flexibility on these parameters (see Remark \ref{Rem_Riesz_MixedLebesgue} below) we will consider the following spaces introduced in \cite{chamorro_paper_lpvar}.

\begin{Definition}
Let $p(\cdot)\in \mathcal{P}^{\log}(\mathbb{R}^n)$ a variable exponent and $1<\mathfrak{p}<+\infty$  a constant. Then, the mixed Lebesgue space $\mathcal{L}^{p(\cdot)}_\mathfrak{p}(\mathbb{R}^n)$ is defined by  $$\mathcal{L}^{p(\cdot)}_\mathfrak{p}(\mathbb{R}^n)=L^{p(\cdot)}(\mathbb{R}^n)\cap L^{\mathfrak{p}}(\mathbb{R}^n),$$ which can be normed 
by the quantity 
\begin{equation}\label{MixedLebesgue}
\|\cdot\|_{\mathcal{L}^{p(\cdot)}_\mathfrak{p}}=\max\{\|\cdot\|_{L^{p(\cdot)}}, \|\cdot\|_{L^{\mathfrak{p}}}\}.
\end{equation}
\end{Definition}

With the help of these spaces we have the following result. 
\begin{Proposition}\label{Proposition_RieszPotential}
Let $1<\mathfrak{p}<+\infty$ be a constant exponent, $p(\cdot)\in \mathcal{P}^{\log}(\mathbb{R}^n)$ a variable exponent and fix a parameter $0<\beta<\min\{n/p^+, n/\mathfrak{p}\}$. Given, $f\in \mathcal{L}^{p(\cdot)}_\mathfrak{p}(\mathbb{R}^n)$ and  a function $\rho(\cdot)$ satisfying  the following condition
\begin{equation}\label{Inegalite2Condition}
\rho(\cdot)=\frac{np(\cdot)}{n-s\mathfrak{p}},
\end{equation} then, there exists a constant $C>0$  such that 
\begin{equation}\label{Inegalite3}
\|\mathcal{I}_\beta(f)\|_{L^{\rho(\cdot)}}\leq C \|f\|_{\mathcal{L}^{p(\cdot)}_\mathfrak{p}}.
\end{equation}
\end{Proposition}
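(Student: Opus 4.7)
The strategy I would use is a Hedberg-type pointwise estimate combined with the boundedness of the Hardy--Littlewood maximal function in $L^{p(\cdot)}$ (Theorem \ref{MaximalFunc_LebesgueVar}). The point of introducing the mixed space $\mathcal{L}^{p(\cdot)}_\mathfrak{p}$ is precisely that it carries both a variable-exponent norm (to plug into the maximal estimate) and a constant-exponent $L^\mathfrak{p}$ norm (to handle the long-range part of the Riesz kernel via a scaling argument). I assume that the exponent $s$ in \eqref{Inegalite2Condition} is a typo for $\beta$, so that $\frac{1}{\rho(\cdot)} = \frac{1}{p(\cdot)}-\frac{\beta \mathfrak{p}}{n\, p(\cdot)}$ and, setting $\alpha:=1-\beta\mathfrak{p}/n \in (0,1)$, one has the key algebraic identity $\alpha\, \rho(\cdot) = p(\cdot)$.

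The first step is the pointwise bound. For $r>0$ I split
\begin{equation*}
\mathcal{I}_\beta(f)(x) = \int_{|x-y|<r}\frac{|f(y)|}{|x-y|^{n-\beta}}\,dy + \int_{|x-y|\geq r}\frac{|f(y)|}{|x-y|^{n-\beta}}\,dy.
\end{equation*}
The first integral is controlled by $C r^{\beta}\, \mathcal{M}(f)(x)$, via a standard dyadic decomposition of the ball and Lemma \ref{lemme_conv_maximal}. For the second integral I apply H\"older's inequality with the constant exponent $\mathfrak{p}$ and its conjugate $\mathfrak{p}'$, which is integrable over $\{|x-y|\geq r\}$ thanks to the restriction $\beta<n/\mathfrak{p}$; this yields $C r^{\beta-n/\mathfrak{p}} \|f\|_{L^\mathfrak{p}}$. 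Optimizing in $r$ gives the Hedberg estimate
\begin{equation*}
\mathcal{I}_\beta(f)(x) \leq C\, \mathcal{M}(f)(x)^{1-\beta \mathfrak{p}/n}\, \|f\|_{L^\mathfrak{p}}^{\beta \mathfrak{p}/n} = C\, \mathcal{M}(f)(x)^{\alpha}\, \|f\|_{L^\mathfrak{p}}^{1-\alpha}.
\end{equation*}

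The second step is to take the $L^{\rho(\cdot)}$-norm of this pointwise inequality. A direct computation with the Luxemburg norm \eqref{Def_LuxNormLebesgue} shows that, because $\alpha$ is a constant, the change of variables $\lambda=\mu^{\alpha}$ in the defining infimum gives
\begin{equation*}
\bigl\|\mathcal{M}(f)^{\alpha}\bigr\|_{L^{\rho(\cdot)}} = \bigl\|\mathcal{M}(f)\bigr\|_{L^{\alpha \rho(\cdot)}}^{\alpha} = \bigl\|\mathcal{M}(f)\bigr\|_{L^{p(\cdot)}}^{\alpha},
\end{equation*}
where the last identity uses $\alpha\,\rho(\cdot)=p(\cdot)$. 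Now Theorem \ref{MaximalFunc_LebesgueVar} applies (we have $p^->1$ and $p(\cdot)\in\mathcal{P}^{\log}$), bounding this by $C\|f\|_{L^{p(\cdot)}}^{\alpha}$. Combining with the factor $\|f\|_{L^{\mathfrak{p}}}^{1-\alpha}$ pulled out from Hedberg's inequality and using Young's numerical inequality $a^\alpha b^{1-\alpha}\leq a+b$ together with the definition \eqref{MixedLebesgue} of the mixed norm delivers \eqref{Inegalite3}.

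The main obstacle, I expect, is the norm-homogeneity manipulation $\|\mathcal{M}(f)^\alpha\|_{L^{\rho(\cdot)}} = \|\mathcal{M}(f)\|_{L^{p(\cdot)}}^\alpha$: in the variable-exponent setting the Luxemburg functional is not $r$-homogeneous in the exponent, and this identity only works because the scalar $\alpha$ is constant and perfectly matched with the pointwise identity $\alpha\rho(\cdot)=p(\cdot)$ forced by \eqref{Inegalite2Condition}. Outside this precise relation one would lose the clean reduction to Theorem \ref{MaximalFunc_LebesgueVar}, which is exactly the rigidity that motivates introducing $\mathcal{L}^{p(\cdot)}_\mathfrak{p}$ in the first place (cf.\ Remark \ref{Rem_Riesz_MixedLebesgue}).
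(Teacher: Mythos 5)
Your proof is correct and is essentially the intended argument: the paper gives no proof of its own for this proposition, deferring entirely to the cited reference \cite{chamorro_paper_lpvar}, and the Hedberg-type splitting you describe (local part controlled by $r^{\beta}\mathcal{M}(f)(x)$, tail controlled via H\"older against the constant exponent $\mathfrak{p}$ using $\beta<n/\mathfrak{p}$, optimization in $r$, then the constant-power homogeneity $\|\mathcal{M}(f)^{\alpha}\|_{L^{\rho(\cdot)}}=\|\mathcal{M}(f)\|_{L^{\alpha\rho(\cdot)}}^{\alpha}$ together with Theorem \ref{MaximalFunc_LebesgueVar}) is the standard proof underlying that reference. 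Your reading of $s$ as a typo for $\beta$ is also the right one, since it is the only choice consistent with how the proposition is invoked in Propositions \ref{prop.Control_f_preambulo} and \ref{prop.Control_Nolineal2}.
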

A proof of this result can be consulted in \cite{chamorro_paper_lpvar}.

\begin{Remarque} 
 We emphasize in particular the fact that the index $\mathfrak{p}$ is not to related to $p^-$ or $p^+$ nor to $p(\cdot)$ and this inequality gives more flexibility in the exponents than the conditions stated in Theorem  \ref{theo.PotentialRieszVariable0}.
\end{Remarque}

\begin{Remarque}\label{Rem_Holder_Mixed_Lebesgue_Var}
Note that, by construction, the mixed spaces $\mathcal{L}^{p(\cdot)}_\mathfrak{p}$ inherit the properties of the spaces $L^{p(\cdot)}$ and $L^{\mathfrak{p}}$. In particular we have the H\"older inequality $\|\varphi\|_{\mathcal{L}^{p(\cdot)}_\mathfrak{p}}\leq \|\varphi\|_{\mathcal{L}^{q(\cdot)}_\mathfrak{q}}\|\varphi\|_{\mathcal{L}^{r(\cdot)}_\mathfrak{r}}$ with $\frac{1}{p(\cdot)}=\frac{1}{q(\cdot)}+\frac{1}{r(\cdot)}$ and $\frac{1}{\mathfrak{p}}=\frac{1}{\mathfrak{q}}+\frac{1}{\mathfrak{r}}$ and of course the Riesz transforms are also bounded in these spaces.\\
\end{Remarque}
For more details about the Lebesgue spaces of variable exponent, their inner structure as well as many other properties, see the books \cite{Cruz_Libro},  \cite{Diening_Libro} and \cite{BookALEX}.\\

\subsection{Some preliminary estimates on the  kernels of $e^{-t(-\Delta)^{\frac{\alpha}{2}}}$ and $e^{-t(-\Delta)^{\frac{\alpha}{2}}}\mathbb{P} \operatorname{div}(\cdot)$ }

We start this subsection we recall some estimates related to the fractional heat kernel involved in the integral formulation of the system  \eqref{NS_Intro}.

\begin{Remarque}\label{remark 2.1 MIAO}
Consider $x\in \R^n$ and the fractional heat kernel $\mathfrak{g}_t^\alpha (x)$ associated to $e^{-t(-\Delta)^{\frac{\alpha}{2}}}$.
Then, there exists a constant $C>0$ such that the following estimate follows
\begin{equation}\label{eq 1 1 24}
|\nabla  \mathfrak{g}_t^\alpha (x)| 
\leq
C
\dfrac{1}{(t^{\frac{1}{2\alpha}}+|x|)^{n+1}}
.
\end{equation}
\end{Remarque}
More details about this remark can be consulted in  \cite[Remark 2.1]{miao2008well}. Another useful result for our purposes is the following. 

\begin{Lemme}\label{Lemma 3.1 MIAO}
Consider $x\in \R^n$, the fractional heat kernel $\mathfrak{g}_t^\alpha (x)$,  and  the parameters $1\leq r \leq p \leq +\infty $. Then, for $\alpha,\nu >0$,   there exists $C>0$ such that 
\begin{enumerate}
    \item  $\left\|
    \mathfrak{g}_t^\alpha *
    \varphi(x)\right\|_{L^p}
\leq
C t^{-\frac{n}{2 \alpha}\left(\frac{1}{r}-\frac{1}{p}\right)}\|\varphi\|_{L^r}$,
\item $\left\|(-\triangle)^{\nu / 2} \mathfrak{g}_t^\alpha * 
\varphi(x)\right\|_{L^p} \leq C t^{-\frac{v}{2 \alpha}-\frac{n}{2 \alpha}\left(\frac{1}{r}-\frac{1}{p}\right)}\|\varphi\|_{L^r}$.
\end{enumerate}
\end{Lemme}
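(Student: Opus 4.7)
The plan is to combine a scaling identity for the fractional heat kernel with Young's convolution inequality, in the same spirit as the classical heat kernel estimates.

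First I would establish the scaling identity. Starting from the Fourier representation $\widehat{\mathfrak{g}_t^\alpha}(\xi)=e^{-t|\xi|^{2\alpha}}$, a change of variable $\eta=t^{1/(2\alpha)}\xi$ in the inverse Fourier transform gives
\begin{equation*}
\mathfrak{g}_t^\alpha(x)=t^{-\frac{n}{2\alpha}}\,\mathfrak{g}_1^\alpha\bigl(x/t^{1/(2\alpha)}\bigr),
\end{equation*}
and a further change of variable in the norm yields $\|\mathfrak{g}_t^\alpha\|_{L^q}=t^{-\frac{n}{2\alpha}(1-\frac{1}{q})}\|\mathfrak{g}_1^\alpha\|_{L^q}$ for every $1\le q\le+\infty$. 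The kernel $K_t:=(-\Delta)^{\nu/2}\mathfrak{g}_t^\alpha$, which has Fourier symbol $|\xi|^{\nu}e^{-t|\xi|^{2\alpha}}$, is homogeneous of degree $\nu$ in the Fourier side and therefore satisfies the analogous scaling
\begin{equation*}
K_t(x)=t^{-\frac{n+\nu}{2\alpha}}\,K_1\bigl(x/t^{1/(2\alpha)}\bigr),\qquad \|K_t\|_{L^q}=t^{-\frac{\nu}{2\alpha}-\frac{n}{2\alpha}(1-\frac{1}{q})}\|K_1\|_{L^q}.
\end{equation*}

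Given these scalings, both assertions follow from Young's convolution inequality: choosing the auxiliary exponent $q\in[1,+\infty]$ by $1+\frac{1}{p}=\frac{1}{q}+\frac{1}{r}$ (so that $1-\frac{1}{q}=\frac{1}{r}-\frac{1}{p}\ge 0$, which is why the hypothesis $r\le p$ is needed), I would apply $\|\mathfrak{g}_t^\alpha\ast\varphi\|_{L^p}\le\|\mathfrak{g}_t^\alpha\|_{L^q}\|\varphi\|_{L^r}$ (and similarly for $K_t$) and substitute the scaling identity above to read off the claimed powers of $t$.

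The main obstacle, and the only non-routine ingredient, is to verify that the constants $\|\mathfrak{g}_1^\alpha\|_{L^q}$ and $\|K_1\|_{L^q}$ are finite uniformly in $q\in[1,+\infty]$. For $\mathfrak{g}_1^\alpha$ this is easy: $\mathfrak{g}_1^\alpha\ge 0$ with $\|\mathfrak{g}_1^\alpha\|_{L^1}=\widehat{\mathfrak{g}_1^\alpha}(0)=1$, and $\mathfrak{g}_1^\alpha\in L^\infty$ because $e^{-|\xi|^{2\alpha}}\in L^1(\Rn)$, and then one interpolates. For $K_1$ the $L^\infty$ bound is equally clear since $|\xi|^\nu e^{-|\xi|^{2\alpha}}\in L^1(\Rn)$. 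The delicate point is the $L^1$-estimate (which then gives every intermediate $q$ by interpolation), since $|\xi|^\nu e^{-|\xi|^{2\alpha}}$ is only $C^\infty$ away from the origin when $\nu$ is not an even integer. To handle this I would split $K_1$ by Fourier cut-off into low and high frequencies: the high-frequency part is Schwartz, hence integrable; for the low-frequency part, controlled differentiations of the Fourier symbol together with the decay already quoted in Remark \ref{remark 2.1 MIAO} yield pointwise bounds of the type $|K_1(x)|\lesssim (1+|x|)^{-(n+2\alpha)}$, whose integrability over $\Rn$ (recall $\alpha>1/2$) closes the argument.

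Once $\|\mathfrak{g}_1^\alpha\|_{L^q}$ and $\|K_1\|_{L^q}$ are seen to be finite, Young's inequality combined with the scaling identities delivers (1) and (2) simultaneously with the correct exponents of $t$, and the constant $C$ depends only on $n,\alpha,\nu$ and the exponents $r,p$.
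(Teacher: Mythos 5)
The paper does not prove this lemma itself---it defers to \cite[Lemma 3.1]{miao2008well}---and your argument (self-similar scaling $\mathfrak{g}_t^\alpha(x)=t^{-n/(2\alpha)}\mathfrak{g}_1^\alpha(x/t^{1/(2\alpha)})$ and its analogue for $K_t=(-\Delta)^{\nu/2}\mathfrak{g}_t^\alpha$, combined with Young's inequality and reduced to the finiteness of $\|\mathfrak{g}_1^\alpha\|_{L^q}$ and $\|K_1\|_{L^q}$) is exactly the standard proof given there, so the approach is essentially the same and correct. Two small misstatements, neither of which breaks the argument: the positivity $\mathfrak{g}_1^\alpha\ge 0$ is only valid for $0<\alpha\le 1$ (enough for this paper's range $\alpha\in\left]\frac12,1\right]$, though the lemma is stated for all $\alpha>0$; integrability of $\mathfrak{g}_1^\alpha$ holds regardless), and the decay of $K_1$ is dictated by the non-smoothness of the symbol $|\xi|^{\nu}e^{-|\xi|^{2\alpha}}$ at the origin, so the correct pointwise bound is $|K_1(x)|\lesssim (1+|x|)^{-(n+\nu)}$ rather than $(1+|x|)^{-(n+2\alpha)}$ when $\nu$ is not an even integer---still integrable since $\nu>0$, so your conclusion stands.
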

A proof of this result can be consulted in \cite[Lemma 3.1]{miao2008well}.
Finally, we recall an useful estimate related to the kernel of $e^{-t(-\Delta)^{\frac{\alpha}{2}}}\mathbb{P} \operatorname{div}(\cdot)$.
\begin{Remarque}\label{remark_kernek_K}
	Consider $x\in \R^n$ and the  kernel $K_t^\alpha (x)$ associated to $e^{-t(-\Delta)^{\frac{\alpha}{2}}}\mathbb{P} \operatorname{div}(\cdot)$.
	Then, there exists a constant $C>0$ such that the following pointwise estimate follows
\begin{equation}\label{EstimateKernel}
	|  {K}_t^\alpha (x)| 
	\leq
	C
	\dfrac{1}{(t^{\frac{1}{2\alpha}}+|x|)^{n+1}}.
\end{equation}
\end{Remarque}
A detailed treatment of this result can be consulted in \cite{qian2023asymptotic}. In particular we recomend to the interested reader to see the discussion around the equation (2.9) in the mentioned paper. 

\section{Mild solutions in variable Lebesgue spaces}\label{Secc_Proof_Existence}
We present here a  general approach to mild solutions for the fractional Navier-Stokes equations (\ref{NS_Intro}) in the setting of variable Lebesgue spaces. These mild solutions are obtained via the following classical result:
\begin{Theoreme}[Banach-Picard  principle]\label{BP_principle}
Consider a Banach space $(E,\|\cdot \|_E )$ and a bounded bilinear application $\mathcal{B}: E \times E \longrightarrow E$: 
$$\|\mathcal{B}(e,e)\|_{E}\leq C_{\mathcal{B}}\|e\|_E\|e\|_E.$$
Given $e_0\in E$  such that $\|e_0\|_E \leq \delta$ with $0<\delta < \frac{1}{4C_{\mathcal{B}}}$, then the equation 
$$e = e_0 -  \mathcal{B}(e,e),$$
admits a unique solution $e\in E$ which satisfies $\| e \|_E \leq 2 \delta$.
\end{Theoreme}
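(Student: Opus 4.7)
The plan is to recast the functional equation $e = e_0 - \mathcal{B}(e,e)$ as a fixed-point problem for the map $T: E \to E$ defined by $T(e) := e_0 - \mathcal{B}(e,e)$, and then to invoke the classical Banach fixed-point theorem on a suitably chosen closed ball in $(E, \|\cdot\|_E)$. Since existence and uniqueness of a solution in $E$ with the estimate $\|e\|_E \leq 2\delta$ are sought simultaneously, I expect the natural domain to be $\overline{B}_E(0, 2\delta) = \{e \in E : \|e\|_E \leq 2\delta\}$, which is complete as a closed subset of a Banach space.

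The first step is stability of the ball under $T$. For any $e$ with $\|e\|_E \leq 2\delta$, the triangle inequality together with the bilinear bound yields
\begin{equation*}
\|T(e)\|_E \leq \|e_0\|_E + C_{\mathcal{B}} \|e\|_E^2 \leq \delta + 4 C_{\mathcal{B}}\, \delta^2.
\end{equation*}
Imposing $4 C_{\mathcal{B}}\, \delta^2 \leq \delta$, equivalently $\delta \leq \tfrac{1}{4 C_{\mathcal{B}}}$, which is precisely the hypothesis, gives $\|T(e)\|_E \leq 2\delta$, so $T$ maps $\overline{B}_E(0,2\delta)$ into itself.

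The second step is the contraction property. Using bilinearity,
\begin{equation*}
\mathcal{B}(e_1, e_1) - \mathcal{B}(e_2, e_2) = \mathcal{B}(e_1 - e_2,\, e_1) + \mathcal{B}(e_2,\, e_1 - e_2),
\end{equation*}
and the bilinear bound yields, for any $e_1, e_2 \in \overline{B}_E(0,2\delta)$,
\begin{equation*}
\|T(e_1) - T(e_2)\|_E \leq C_{\mathcal{B}}\bigl(\|e_1\|_E + \|e_2\|_E\bigr)\|e_1 - e_2\|_E \leq 4 C_{\mathcal{B}}\, \delta\, \|e_1 - e_2\|_E.
\end{equation*}
The strict inequality $\delta < \tfrac{1}{4 C_{\mathcal{B}}}$ then gives a Lipschitz constant $k := 4 C_{\mathcal{B}}\, \delta < 1$, so $T$ is a strict contraction on the complete metric space $\overline{B}_E(0, 2\delta)$.

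The conclusion follows from Banach's fixed-point theorem: $T$ admits a unique fixed point $e \in \overline{B}_E(0, 2\delta)$, which by construction satisfies $e = e_0 - \mathcal{B}(e,e)$ and the bound $\|e\|_E \leq 2\delta$. For uniqueness in all of $E$ (not just in the ball), one argues separately that any putative solution $\tilde{e}$ must satisfy $\|\tilde e\|_E \leq \|e_0\|_E + C_{\mathcal{B}}\|\tilde e\|_E^2$, so the polynomial $C_{\mathcal{B}} x^2 - x + \delta \geq 0$ constrains $\|\tilde e\|_E$ to lie in one of two intervals; standard bootstrap arguments (continuity in data, or the alternative that both fixed points would differ by at most $k\,\|e - \tilde e\|_E$ once placed in a common ball) close the gap. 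The only subtle point, and the closest thing to an obstacle, is this careful calibration between the radius $2\delta$ and the smallness threshold $\tfrac{1}{4C_{\mathcal{B}}}$, which is dictated precisely so that stability of the ball and the contraction estimate hold simultaneously with room to spare.
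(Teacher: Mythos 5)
Your core argument is correct and is the standard one; note that the paper itself offers no proof of this statement, simply invoking it as a classical result, so there is nothing to compare against beyond the textbook argument you reproduce. Two remarks. First, the hypothesis as written only bounds $\mathcal{B}$ on the diagonal, whereas your contraction step needs the full bilinear bound $\|\mathcal{B}(e,f)\|_E\leq C_{\mathcal{B}}\|e\|_E\|f\|_E$; this is what ``bounded bilinear application'' is meant to convey, but it is worth stating explicitly that you use it off the diagonal in the identity $\mathcal{B}(e_1,e_1)-\mathcal{B}(e_2,e_2)=\mathcal{B}(e_1-e_2,e_1)+\mathcal{B}(e_2,e_1-e_2)$. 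Second, your closing paragraph overreaches: uniqueness in all of $E$ is false in general (already in $E=\mathbb{R}$ with $\mathcal{B}(x,y)=C_{\mathcal{B}}xy$ the scalar equation $x=x_0-C_{\mathcal{B}}x^2$ has a second, large root), so no bootstrap can ``close the gap.'' The statement only claims uniqueness among solutions with $\|e\|_E\leq 2\delta$, and that is exactly what your contraction on $\overline{B}_E(0,2\delta)$ delivers --- indeed the same difference estimate shows uniqueness in the larger open ball of radius $\tfrac{1}{2C_{\mathcal{B}}}$, since two solutions $e,\tilde e$ there satisfy $\|e-\tilde e\|_E\leq C_{\mathcal{B}}(\|e\|_E+\|\tilde e\|_E)\|e-\tilde e\|_E$ with $C_{\mathcal{B}}(\|e\|_E+\|\tilde e\|_E)<1$. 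Dropping the final paragraph, or replacing it by this observation, makes the proof complete.
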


In order to apply this result to the fractional Navier-Stokes equations (\ref{NS_Intro}) we need to get rid of the pressure $P$ and for this we apply to this system the Leray projector $\mathbb{P}$ defined by
$\mathbb{P}:=\text{Id} +  \vn (-\Delta)^{-1} div$. 
On the other hand,
note that the divergence-free condition allows us to recover the pressure $P$ from $\vu$ through the identity 
\begin{equation*}
    - \Delta P = div( \vu \cdot \vn \vu ), 
\end{equation*}
thus, in the following we will focus our analysis on the velocity vector field $\vu$.

\begin{Remarque}
We emphasise the fact that
the Leray projector $\mathbb{P}$ 
can be defined equivalently by considering  Riesz transforms. In fact, given a function $\vec{g}$ in a functional space $\mathcal{X}$, the Leray projector is defined as follows:   $\mathbb{P}(\vec{g})=(Id_{3\times 3}-\vec{R}\otimes\vec{R})(\vec{g})$ where $\vec{R}=(R_1, R_2, R_3)$, with $R_j$ denoting the $j$-th Riesz transform. Thus, if the Riesz transforms are bounded in $\mathcal{X}$, then the Leray's projector $\mathbb{P}$ is also bounded on $\mathcal{X}$, and then we can write  $\|\mathbb{P}(\vec{g})\|_\mathcal{X}\leq C\|\vec{g}\|_\mathcal{X}$.
\end{Remarque}

As we anticipated above, the pressure term $P$ involved in \eqref{NS_Intro} has the property   $\mathbb{P}(\vn P)\equiv 0$. 
Moreover, given  a divergence free vector field,  we have the identity $\mathbb{P}(\vec{g})= \vec{g}$. A proof of these facts and many others properties of the projector $\mathbb{P}$, can be consulted in the book  \cite{Lemarie2016}.\\

Since $\vu$ and $\vf$ are divergence free, by applying  Leray's projector $\mathbb{P}$ to \eqref{NS_Intro} we obtain the equation
$$
\begin{cases}
\partial_t\vu=-(-\Delta)^{\alpha}\vu-\mathbb{P}(div(\vu \otimes \vu))+\vf, \quad div(\vu)=0,\\[2mm]
\vu(0,x)=\vu_0(x), \qquad x\in \mathbb{R}^3.
\end{cases}
$$
Now, due to the Dumahel formula, we can write this equation in the following form 
\begin{equation}\label{NS_Integral}
\vu(t,x)=
\mathfrak{g}^\alpha_t\ast \vu_0(x)
+
\int_{0}^t
\mathfrak{g}^\alpha_{t-s}\ast\vf(s, x)
ds
-
\int_{0}^t
\mathfrak{g}^\alpha_{t-s}\ast \mathbb{P}(div(\vu \otimes \vu))(s, x)
ds,
\end{equation}
where $\mathfrak{g}^\alpha_t$ is the usual fractional heat kernel. 
In the following we will consider  the integral equation above in order to apply the Banach-Picard principle. To this end,  we set the bilinear application as 
\begin{equation}\label{bilinear-appl}
   \mathcal{B}(\vu,\vv) =
\int_{0}^t
\mathfrak{g}^\alpha_{t-s}\ast \mathbb{P}(div(\vu \otimes \vv))(s, x)ds
, 
\end{equation} 
and we
consider the term $\mathfrak{g}^\alpha_t\ast \vu_0(x)
+
\int_{0}^t
\mathfrak{g}^\alpha_{t-s}\ast\vf(s, x)
ds$ as $e_0$ (in Theorem \ref{BP_principle}).

\begin{Remarque}\label{RMK_kernel_and_B}
Note that, given  the kernel $K_t^\alpha (x)$ of the convolution operator $e^{-t(-\Delta)^{\frac{\alpha}{2}}}\mathbb{P} \operatorname{div}(\cdot)$, the $j$-th component of
the bilinear application defined in \eqref{bilinear-appl}
can be written  as
\begin{equation*}
	\mathcal{B}_j(\vu,\vu) = \int_{0}^{t} \sum_{h,k =1}^3 (K^\alpha_{t-s})_{j} * (\vu_h\vu_k)(s)ds.
\end{equation*}
\end{Remarque}
A detailed treatment of this result and the kernel $K_t^\alpha (x)$ can be consulted in \cite[Section 2]{qian2023asymptotic}.

\subsection{Proof of Theorem \ref{Theoreme_1}}
In the following we consider a variable exponent $p(\cdot)\in \mathcal{P}^{log}([0,+\infty[)$, and the functional space 
$$
\mathfrak{E}_T=
 L^{p(\cdot)} \left( [0,T], L^{q} (\Rt) \right),$$  
with $T\in ]0,+\infty[ $ to be precised later. 
The space $\mathfrak{E}_T$ is endowed with a Luxemburg norm as follows
\begin{equation}\label{Norm_PointFixe_2}
\| \vec{\varphi}\|_{\mathfrak{E}_T}=\inf\left\{\lambda > 0: \,\int_0^{T}\left|\frac{ \|
\vec{\varphi} (t,\cdot)\|_{L^{q}}}{\lambda}\right|^{p(t)} dt \leq1\right\}.
\end{equation}
Under this functional setting we will consider the Banach-Picard principle to construct mild solutions for the integral equation  (\ref{NS_Integral}).
More precisely, in the following we will prove 3 propositions which will provide the core of the hypotheses of Theorem \ref{BP_principle}.
\\

To this end, we start by proving the following result regarding a control to the initial data. 

\begin{Proposition}\label{prop.Control_Uo_Lplq}
Let $\alpha\in ]\frac 1 2, 1],\ p(\cdot )\in \mPl(\Rt)$ with $p^->2$ and fix an index $q>\frac{3}{2\alpha -1}$ by the relationship
$\frac{\alpha}{p(\cdot)}+\frac{3}{2q}<\alpha - \frac{1}{2}$. 
Consider a function
$\vu_0\in L^{q} (\Rt)$. Then, there exists a constant $C_1>0$ such that 
\begin{equation}\label{Control_Uo_Lplq}
\|\mathfrak{g}_t^\alpha * \vu_0  \|_{ \mathfrak{E}_T}\leq C_1\| \vu_0  \|_{L^{q}}.
\end{equation} 
\end{Proposition}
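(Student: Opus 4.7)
The plan is to decouple the two norms: first bound the linear evolution in $L^q(\mathbb{R}^3)$ at each time $t$, which produces a uniform (in $t$) control by $\|\vu_0\|_{L^q}$, and then take the Luxemburg norm in time, which amounts to estimating the $L^{p(\cdot)}([0,T])$-norm of a constant.

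\textbf{Step 1: Pointwise-in-time bound in $L^q$.} I would apply the first inequality of Lemma \ref{Lemma 3.1 MIAO} with $r=p=q$ and $n=3$. Since $\frac{1}{r}-\frac{1}{p}=0$, the $t$-power disappears and we obtain
\begin{equation*}
\|\mathfrak{g}_t^\alpha * \vu_0\|_{L^q(\mathbb{R}^3)} \leq C\,\|\vu_0\|_{L^q(\mathbb{R}^3)}
\end{equation*}
uniformly in $t\in [0,T]$. Note that for this step neither the condition $p^->2$ nor the relation linking $\alpha, q, p(\cdot)$ is needed; those assumptions will be required for the forcing term and the bilinear estimate, not here.

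\textbf{Step 2: Luxemburg norm in time.} Denote $A:=C\|\vu_0\|_{L^q}$, so that $\|\mathfrak{g}_t^\alpha*\vu_0\|_{L^q}\leq A$ for every $t\in[0,T]$. From the definition \eqref{Norm_PointFixe_2}, it suffices to find $\lambda>0$ such that $\int_0^T (A/\lambda)^{p(t)}\,dt\leq 1$. Taking $\lambda = A\,\max\{1, T^{1/p^-}\}$, one checks both cases: if $T\leq 1$ the integrand equals $1$ and the integral is $T\leq 1$; if $T>1$, then $(1/T^{1/p^-})^{p(t)}\leq 1/T$ since $p(t)\geq p^-$, and the integral is again $\leq 1$. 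Consequently
\begin{equation*}
\|\mathfrak{g}_t^\alpha*\vu_0\|_{\mathfrak{E}_T}\leq \max\{1,T^{1/p^-}\}\,A = C_1\,\|\vu_0\|_{L^q},
\end{equation*}
with $C_1=C\max\{1,T^{1/p^-}\}<+\infty$, which is the claim.

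\textbf{Expected difficulty.} This proposition is really just the combination of a standard fractional heat-kernel estimate with an elementary computation of the Luxemburg norm of a constant on a bounded interval, so I do not anticipate any substantive obstacle; the only subtlety is the handling of the variable exponent, which is dealt with cleanly by splitting the small-$T$ and large-$T$ regimes via $p^-$. The dependence $C_1 = C_1(T)$ with $C_1(T)\to C$ as $T\to 0^+$ will be useful later when choosing $T$ small enough to apply the Banach-Picard principle.
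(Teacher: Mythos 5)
Your proof is correct and follows essentially the same route as the paper: a Young-type inequality in $x$ (the paper uses $\|\mathfrak{g}_t^\alpha\ast\vu_0\|_{L^q}\leq\|\mathfrak{g}_t^\alpha\|_{L^1}\|\vu_0\|_{L^q}$, which is your Lemma~\ref{Lemma 3.1 MIAO} with $r=p=q$), followed by the $L^{p(\cdot)}([0,T])$-norm of the constant function, which the paper handles by citing Lemma~\ref{Lemma_subset} and you compute by hand; your observation that neither $p^->2$ nor the relation between $\alpha$, $q$, $p(\cdot)$ is used here also matches the paper. One caveat on your closing remark: your choice $\lambda=A\max\{1,T^{1/p^-}\}$ yields $C_1(T)\to C>0$ as $T\to 0^+$, whereas the sharper estimate $\|1\|_{L^{p(\cdot)}([0,T])}\leq C\max\{T^{1/p^-},T^{1/p^+}\}$ (which is $\lesssim T^{1/p^+}\to 0$ for $T\leq 1$, obtained by taking $\lambda=AT^{1/p^+}$ in that regime) is what the paper relies on at the end of the proof of Theorem~\ref{Theoreme_1}, since the smallness required by the Banach--Picard principle for \emph{arbitrary} data is produced precisely by letting this constant vanish as $T\to 0^+$.
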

\begin{proof}
In order to conclude
the inequality \eqref{Control_Uo_Lplq}, we  start by considering a Young inequality to obtain 
$$\|\mathfrak{g}_t^\alpha * \vu_0  \|_{L^{q} (\Rt)} \leq \|\mathfrak{g}_t^\alpha   \|_{L^{1} (\Rt)}\| \vu_0  \|_{L^{q} (\Rt)} = \| \vu_0  \|_{L^{q} (\Rt)}.$$ 
Before continue, let consider the following classical result in the context of Variable Lebesgue spaces (see\cite[Lemma 3.2.12, Section 3.2]{Diening_Libro}).
\begin{Lemme}\label{Lemma_subset}
Let $p(\cdot)\in \mathcal{P}([0,+\infty[)$ such that $1<p^-\leq p^+<+\infty$. Then, there exists $C>0$ such that 
$$
\|1\|_{L^{p(\cdot)}([0,T])} \leq
C \max\left\{T^{\frac{1}{p^{-}}}, T^{\frac{1}{p^{+}}}\right\}.$$
\end{Lemme}
Thus, by taking the $L^{p(\cdot)}_t$-norm  we obtain 
\begin{eqnarray}\label{Estimation_DonneeInitiale}
\|\mathfrak{g}_t^\alpha * \vu_0  \|_{L^{p(\cdot)} _tL^{q}_x }
&\leq& C\|\vu_0\|_{L^{q} (\Rt)}\|1\|_{L^{p(\cdot)}([0,T])} 
\\
&\leq& C \|\vu_0\|_{L^{q} (\Rt)}\max\left\{T^{\frac{1}{p^{-}}}, T^{\frac{1}{p^{+}}}\right\}.
\end{eqnarray}
Thus, by considering $C_1=C\max\left\{T^{\frac{1}{p^{-}}}, T^{\frac{1}{p^{+}}}\right\}$ we deduce \eqref{Control_Uo_Lplq} and we conclude the proof of Proposition \ref{prop.Control_Uo_Lplq}. 
\end{proof}
\begin{Proposition}\label{prop.Control_force_LpLq}
Let $\alpha\in ]\frac 1 2, 1],\ p(\cdot )\in \mPl(\Rt)$ with $p^->2$ and fix an index $q>\frac{3}{2\alpha -1}$ by the relationship
$\frac{\alpha}{p(\cdot)}+\frac{3}{2q}<\alpha - \frac{1}{2}$. 
Then, given a function
 $\vf\in L^{p(\cdot)} \left( [0,+\infty[,  L^{q} (\R^3) \right)$, there exists a numerical constant $C_2>0$ such that
\begin{equation} \label{Control_force_LpLq}
\left\|\int_0^t\mathfrak{g}_{t-s}^\alpha *\vf(s, \cdot) ds\right\|_{\mathfrak{E}_T} \leq C_2\|\vf \|_{L^1_t(L^q_x)}. 
\end{equation} 
\end{Proposition}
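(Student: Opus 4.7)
The plan is to mirror the strategy used in Proposition~\ref{prop.Control_Uo_Lplq}, proceeding in three clean steps and exploiting only the $L^1_x$-normalization of the fractional heat kernel (no pointwise decay estimate such as Remark~\ref{remark 2.1 MIAO} is needed here, since this is a linear estimate in which the Laplacian is not distributed across the convolution).

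First, I would apply Young's convolution inequality in the space variable together with Lemma~\ref{Lemma 3.1 MIAO} (with $r=p=q$ and $\nu=0$), which gives $\|\mathfrak{g}_{t-s}^\alpha\|_{L^1(\R^3)}\leq C$, to obtain for each $0<s<t$
$$\|\mathfrak{g}_{t-s}^\alpha *\vf(s,\cdot)\|_{L^q_x} \leq C\,\|\vf(s,\cdot)\|_{L^q_x}.$$
Second, I would invoke Minkowski's integral inequality to move the $L^q_x$-norm inside the time integral:
$$\left\|\int_0^t \mathfrak{g}_{t-s}^\alpha *\vf(s,\cdot)\,ds\right\|_{L^q_x} \leq \int_0^t \|\mathfrak{g}_{t-s}^\alpha*\vf(s,\cdot)\|_{L^q_x}\,ds \leq C\int_0^t \|\vf(s,\cdot)\|_{L^q_x}\,ds \leq C\,\|\vf\|_{L^1_tL^q_x}.$$
The crucial observation is that this bound is independent of $t\in[0,T]$, which reduces the remaining work to computing the $L^{p(\cdot)}$-norm of a constant function in time.

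Third, I would take the $L^{p(\cdot)}$-norm in $t$ over $[0,T]$ of the previous inequality. Since $\|\vf\|_{L^1_tL^q_x}$ is a numerical constant with respect to $t$, the defining infimum in the Luxemburg norm \eqref{Norm_PointFixe_2} allows us to factor it out, yielding
$$\left\|\int_0^t\mathfrak{g}_{t-s}^\alpha *\vf(s, \cdot)\, ds\right\|_{\mathfrak{E}_T}\leq C\,\|\vf\|_{L^1_tL^q_x}\,\|1\|_{L^{p(\cdot)}([0,T])}.$$
Finally, applying Lemma~\ref{Lemma_subset} gives $\|1\|_{L^{p(\cdot)}([0,T])}\leq C\max\{T^{1/p^-},T^{1/p^+}\}$, so the conclusion holds with
$$C_2=C\max\{T^{1/p^-},T^{1/p^+}\}.$$

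I do not anticipate a real obstacle here: the statement is linear and does not involve the bilinear interaction $\vu\otimes\vu$, so the refined pointwise bounds on $\mathfrak{g}_t^\alpha$ and $K_t^\alpha$ are not yet needed. The only mild care point is to verify that a $t$-constant quantity can be pulled out of the Luxemburg norm in time, which is immediate from the scaling property of \eqref{Def_LuxNormLebesgue}. The hypothesis $p^->2$ and the relation $\frac{\alpha}{p(\cdot)}+\frac{3}{2q}<\alpha-\frac12$ play no role at this stage; they will be decisive only in the forthcoming bilinear estimate, where the time-singularity $(t-s)^{-\frac{3}{2\alpha q}}$ coming from Lemma~\ref{Lemma 3.1 MIAO} must be absorbed in $L^{p(\cdot)}_t$.
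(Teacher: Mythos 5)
Your proof is correct and follows essentially the same route as the paper's: Young's inequality in space to reduce to $\|\vf(s,\cdot)\|_{L^q_x}$, Minkowski in time to obtain a $t$-uniform bound by $\|\vf\|_{L^1_t(L^q_x)}$, and then Lemma~\ref{Lemma_subset} to evaluate $\|1\|_{L^{p(\cdot)}([0,T])}$, giving the same constant $C_2=C\max\{T^{1/p^-},T^{1/p^+}\}$. Your closing observation that the hypotheses $p^->2$ and $\frac{\alpha}{p(\cdot)}+\frac{3}{2q}<\alpha-\frac12$ are not used here is also accurate.
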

\begin{proof}
To conclude the inequality \eqref{Control_force_LpLq},  we start by considering
the $L^q_x$-norm to the term $\int_0^t\mathfrak{g}_{t-s}^\alpha \ast \vf (s, \cdot)ds$ 
in order to get 
\begin{eqnarray*}
\left\|
\int_0^t\mathfrak{g}_{t-s}^\alpha \ast \vf (s, \cdot)ds
\right\|_{L^{q}} 
&\leq&  \int_0^t\|\mathfrak{g}_{t-s}^\alpha\|_{L^1} \| \vf(s, \cdot) \|_{L^{q}}ds
\\
&\leq& C\| \vf\|_{L^1_t(L^{q}_x)}.
\end{eqnarray*}
Then, by taking the $L^{p(\cdot)}_t$-norm in the time variable and considering Lemma \ref{Lemma_subset} we deduce the estimates
\begin{eqnarray}
\left\|\int_0^t\mathfrak{g}_{t-s}^\alpha \ast \vf(s, \cdot) ds\right\|_{L^{p(\cdot)}_t(L^{q}_x)}
&\leq &
C\left\|\| \vf  \|_{L^1_t(L^{q}_x)}\right\|_{L^{p(\cdot)}_t}
\\
&\leq&
C\| \vf\|_{L^1_t(L^{q}_x)}\|1\|_{L^{p(\cdot)}([0,T])}\notag\\
&\leq &C\| \vf\|_{L^1_t(L^{q}_x)}\max\left\{T^{\frac{1}{p^{-}}}, T^{\frac{1}{p^{+}}}\right\}.\label{Estimation_ForceExterieure}
\end{eqnarray}
Thus, by considering $C_2=C\max\left\{T^{\frac{1}{p^{-}}}, T^{\frac{1}{p^{+}}}\right\}$ we deduce the inequality \eqref{Control_force_LpLq} and then we conclude the proof of Proposition \ref{prop.Control_force_LpLq}. 
\end{proof}

\begin{Proposition}\label{prop.Control_Nolineal}
Let $\alpha\in ]\frac 1 2, 1],\ p(\cdot )\in \mPl(\Rt)$ with $p^->2$ and fix an index $q>\frac{3}{2\alpha -1}$ by the relationship
$\frac{\alpha}{p(\cdot)}+\frac{3}{2q}<\alpha - \frac{1}{2}$. 
Then, 
there exists a constant $C_{\mathcal{B}}>0$ such that
\begin{equation}\label{Control_Nolineal}
\left\|
\int_{0}^t\mathfrak{g}_{t-s}^\alpha\ast \mathbb{P}(div(\vu \otimes \vu))(s, \cdot)ds
\right\|_{\mathfrak{E}_T}\leq C_{\mathcal{B}}\|\vu\|_{\mathfrak{E}_T}\|\vu\|_{\mathfrak{E}_T}.
\end{equation}
\end{Proposition}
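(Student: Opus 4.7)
The plan is to reduce the bilinear estimate to a one-dimensional fractional integration in the time variable, which I will then control by the variable Lebesgue Riesz potential theorem combined with the bounded-interval embedding.

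First, using Remark \ref{RMK_kernel_and_B}, I rewrite each component of $\mathcal{B}$ in terms of the kernel $K^\alpha$. The pointwise decay \eqref{EstimateKernel}, together with a rescaling $y = x/(t-s)^{1/(2\alpha)}$ in $\mathbb{R}^3$, yields
\[
\|K^\alpha_{t-s}\|_{L^{q'}(\mathbb{R}^3)} \leq C(t-s)^{-\beta}, \qquad \beta := \frac{1}{2\alpha}+\frac{3}{2\alpha q},
\]
where $q' = q/(q-1)$. Young's convolution inequality together with the classical Hölder estimate $\|\vu\otimes\vu\|_{L^{q/2}_x} \leq \|\vu\|_{L^q_x}^2$ then gives, after integrating in $s$,
\[
\|\mathcal{B}(\vu,\vu)(t,\cdot)\|_{L^q_x} \leq C\int_0^t (t-s)^{-\beta}\, U(s)^2\, ds, \qquad U(s) := \|\vu(s,\cdot)\|_{L^q_x}.
\]
The hypothesis $q > 3/(2\alpha-1)$ is exactly what makes $\beta < 1$, so the power kernel is locally integrable near $s=t$.

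Next, extending $U$ by zero outside $[0,T]$, the right-hand side is dominated by a multiple of the one-dimensional Riesz potential $\mathcal{I}_{1-\beta}(U^2)(t)$. Since $p^- > 2$, the variable Hölder inequality \eqref{Holder_LebesgueVar} provides $\|U^2\|_{L^{p(\cdot)/2}([0,T])} \leq C\|U\|_{L^{p(\cdot)}([0,T])}^2 = C\|\vu\|_{\mathfrak{E}_T}^2$. Applying Theorem \ref{theo.PotentialRieszVariable0} in dimension $n=1$ to $\mathcal{I}_{1-\beta}: L^{p(\cdot)/2}([0,T]) \to L^{\tilde q(\cdot)}([0,T])$, with $\frac{1}{\tilde q(\cdot)} = \frac{2}{p(\cdot)} - (1-\beta)$, I obtain
\[
\|\mathcal{I}_{1-\beta}(U^2)\|_{L^{\tilde q(\cdot)}([0,T])} \leq C\|\vu\|_{\mathfrak{E}_T}^2.
\]

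To close the loop, I rearrange the hypothesis $\frac{\alpha}{p(\cdot)} + \frac{3}{2q} < \alpha - \frac{1}{2}$ as $\frac{1}{p(\cdot)} + \beta < 1$, so that $\frac{1}{\tilde q(\cdot)} < \frac{1}{p(\cdot)}$, i.e.\ $\tilde q(\cdot) > p(\cdot)$ pointwise. Lemma \ref{lemme_embeding} for bounded domains then supplies $L^{\tilde q(\cdot)}([0,T]) \hookrightarrow L^{p(\cdot)}([0,T])$ with embedding constant $1 + T$, which yields
\[
\|\mathcal{B}(\vu,\vu)\|_{\mathfrak{E}_T} \leq C(1+T)\,\|\vu\|_{\mathfrak{E}_T}^2,
\]
i.e.\ \eqref{Control_Nolineal} with $C_{\mathcal{B}} = C(1+T)$ (shrinkable through the choice of $T$ when combined with Propositions \ref{prop.Control_Uo_Lplq} and \ref{prop.Control_force_LpLq} in the Banach--Picard principle). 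The main obstacle I anticipate is the variable Riesz potential step: one must check that $p(\cdot)/2 \in \mathcal{P}^{\log}([0,T])$ (inherited from the hypothesis on $p(\cdot)$) and that $1-\beta$ lies in the admissible range $(0,2/p^+)$ required by Theorem \ref{theo.PotentialRieszVariable0}. The strict inequality in the hypothesis provides exactly the slack needed for both the Riesz potential application and the subsequent variable exponent embedding, and this is the structural reason behind the scaling condition imposed on $p(\cdot)$, $q$, and $\alpha$.
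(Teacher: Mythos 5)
Your proposal is correct in its overall architecture and reaches the same key intermediate estimate as the paper, namely
\begin{equation*}
\left\|\int_0^t \mathfrak{g}^{\alpha}_{t-s}\ast\mathbb{P}(div(\vu\otimes\vu))(s,\cdot)\,ds\right\|_{L^q_x}
\leq C\int_0^t (t-s)^{-\frac{1}{2\alpha}-\frac{3}{2\alpha q}}\,\|\vu(s,\cdot)\|^2_{L^q_x}\,ds,
\end{equation*}
but you get there differently (via the pointwise decay of $K^{\alpha}_t$, a scaling computation of $\|K^{\alpha}_{t-s}\|_{L^{q'}}$ and Young's inequality, rather than via the $L^q$-boundedness of the Leray projector plus Lemma \ref{Lemma 3.1 MIAO}); both routes are legitimate and give the same exponent. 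The genuine divergence is in the second half: the paper dualizes with the norm conjugate formula \eqref{Norm_conjugate_formula}, applies Fubini, and lets the one-dimensional Riesz potential act on the dual test function $\psi\in L^{p'(\cdot)}$, closing with a three-factor H\"older inequality and the embedding Lemma \ref{lemme_embeding}; you instead apply the Riesz potential directly to $\|\vu(s,\cdot)\|_{L^q_x}^2\in L^{p(\cdot)/2}_t$ and embed the resulting $L^{\tilde q(\cdot)}$ back into $L^{p(\cdot)}$. Your direct version is shorter and avoids the duality step entirely; the two arguments are exact mirror images of each other (the exponents $r(\cdot)$ of the paper and your $\tilde q(\cdot)$ are conjugate), and the hypothesis $\frac{\alpha}{p(\cdot)}+\frac{3}{2q}<\alpha-\frac12$ plays the identical role in both, namely guaranteeing $\tilde q(\cdot)>p(\cdot)$ in your version and $r(\cdot)<p'(\cdot)$ in the paper's.

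One caution: the obstacle you flag at the end is real, but your claim that the hypothesis ``provides exactly the slack needed'' for the Riesz potential step is not accurate. Writing $\gamma=1-\frac{1}{2\alpha}-\frac{3}{2\alpha q}$ for the order of the potential, the hypothesis yields the \emph{lower} bound $\frac{1}{p^+}<\gamma$, whereas Theorem \ref{theo.PotentialRieszVariable0} applied to the exponent $p(\cdot)/2$ requires the \emph{upper} bound $\gamma<\frac{2}{p^+}$ (without it, $\frac{1}{\tilde q(\cdot)}=\frac{2}{p(\cdot)}-\gamma$ can be nonpositive where $p$ is large). The two together force roughly $p^+<2p^-$, which is an extra restriction not contained in the statement. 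You should not feel singled out: the paper's dual argument carries exactly the mirror-image requirement ($r(\cdot)\geq 1$, i.e.\ $\gamma\leq\frac{2}{p^+}$ again), so neither proof is more general than the other on this point; but you should state the condition as an additional assumption rather than assert that it follows from the scaling hypothesis.
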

\begin{proof}

To conclude the inequality \eqref{Control_Nolineal},  we start by considering
the $L^q_x$-norm to the term $\int_{0}^t\mathfrak{g}_{t-s}^\alpha\ast \mathbb{P}(div(\vu \otimes \vu))(s, \cdot)ds$ 
to get  
$$\left\|
\int_{0}^t\mathfrak{g}_{t-s}^\alpha\ast \mathbb{P}(div(\vu \otimes \vu))(s, \cdot)ds
\right\|_{L^q}\leq C\int_{0}^t\left\|
\mathfrak{g}_{t-s}^\alpha
\ast \mathbb{P}(div(\vu \otimes \vu))(s, \cdot)
\right\|_{L^q}ds.$$
Considering the properties of the Leray projector $\mathbb{P}$ and the fact that the Riesz transform which define it are bounded on classical Lebesgue spaces $L^q$, with $1<q<+\infty$, we conclude 
$$\left\|\int_{0}^t\mathfrak{g}_{t-s}^\alpha\ast \mathbb{P}(div(\vu \otimes \vu))(s, \cdot)ds
\right\|_{L^q}\leq C\int_0^t \left\|\vn \mathfrak{g}_{t-s}^\alpha * \vu\otimes\vu(s, \cdot)\right\|_{L^q} ds.$$
Now, note that 
a Young inequality, with $1+\frac{1}{q}=\frac{q-1}{q}+\frac{2}{q}$, 
and Lemma \ref{Lemma 3.1 MIAO}, with parameters $\nu = 1  $, $n=3$, 
$p=p$ and $r=\frac{p}{2}$, yield 
\begin{eqnarray*}
\int_0^t \left\|\vn \mathfrak{g}_{t-s}^\alpha * \vu\otimes\vu(s, \cdot)\right\|_{L^q} ds
&\leq&
C\int_0^t  
 \frac{1}{
 (t-s)^{
 \frac{1}{2\alpha}+\frac{3}{2\alpha q}
 }
 }
\|\vu(s,\cdot)\|_{L^{q}}\|\vu(s,\cdot)\|_{L^{q}} ds.
\end{eqnarray*}
To continue, we take $L^{p(\cdot)}_t$-norm and we consider the 
 conjugate exponent $p'(\cdot)$ defined by $1=\frac{1}{p(\cdot)}+\frac{1}{p'(\cdot)}$. Then, by considering the norm conjugate formula (\ref{Norm_conjugate_formula}) we can write
\begin{eqnarray}
\left\|\int_{0}^t\mathfrak{g}_{t-s}^\alpha\ast \mathbb{P}(div(\vu \otimes \vu))(s, \cdot)ds
\right\|_{L_t^{p(\cdot)}(L_x^q)}
&\leq&
\left\|\int_0^t 
\frac{1}{
 (t-s)^{
 \frac{1}{2\alpha}+\frac{3}{2\alpha q}
 }
 }
\left\|
\vu(s,\cdot)
\right\|_{L^{q}}^2ds\right\|_{L_t^{p(\cdot)}([0,T])}\label{Identite_NormeDualite}
\\
&\leq &
\sup_{
\| \psi \|_{L^{p'(\cdot)}}\leq 1
} 
\int_0^{T} \int_0^t 
\frac{|\psi(t)|}{
|t-s|^{
\frac{1}{2\alpha}+\frac{3}{2 \alpha q}
}}
\left\|\vu(s,\cdot)\right\|_{L^{q}}^2ds \,dt.\notag
\end{eqnarray}
The Fubini Theorem yields 
$$
\sup_{\| \psi \|_{L^{p'(\cdot)}}\leq 1}
\int_0^{T} \int_0^t 
\frac{|\psi(t)|}{
|t-s|^{
\frac{1}{2\alpha}+\frac{3}{2  \alpha    q
}
}}\left\|\vu(s,\cdot)\right\|_{L^{q}}^2ds \,dt
=
\sup_{\| \psi \|_{L^{p'(\cdot)}}\leq 1}
\int_0^T\int_0^T
\frac{1_{ \{ 0<s<t \} } |\psi(t)| }{|t-s|^{
\frac{1}{2\alpha}+\frac{3}{2  \alpha    q}
}}
dt\|\vu(s,\cdot)
\|^2_{L^q}ds,
$$
thus, by extending the function $\psi(t)$ by zero 
on $\mathbb{R} \setminus [0,T]$, we obtain 
\begin{eqnarray*}
\sup_{\| \psi \|_{L^{p'(\cdot)}}\leq 1}
\int_0^{T} \int_0^t 
\frac{|\psi(t)|}{|t-s|^{
\frac{1}{2\alpha}+\frac{3}{2  \alpha    q}
}}\left\|\vu(s,\cdot)\right\|_{L^{q}}^2ds \,dt
&=&
\sup_{\| \psi \|_{L^{p'(\cdot)}}\leq 1}
\int_0^T
\left(\int_{-\infty}^{+\infty}\frac{|\psi(t)| }{
|t-s|^{
\frac{1}{2\alpha}+\frac{3}{2  \alpha    q}
}
}
dt
\right)
\|\vu(s,\cdot)
\|^2_{L^q}ds
\\
&=&\sup_{\| \psi \|_{L^{p'(\cdot)}}\leq 1}\int_0^T\mathcal{I}_\beta(|\psi|)(s)\|\vu(s,\cdot)\|^2_{L^q}ds,
\end{eqnarray*}
where $\mathcal{I}_\beta$ is the 1D Riesz potential   with $\beta=1- \frac{1}{2\alpha}-\frac{3}{2  \alpha    q}
<1$ (see Definition \ref{Definition_RieszPotential}). 
\begin{Remarque}
We emphasise the fact that the constraints  $\frac{3}{2\alpha -1}<q$ and $\alpha \in ] \frac 1 2, 1]$ imply $0<1- \frac{1}{2\alpha}-\frac{3}{2  \alpha    q}<1$, and thus the Riesz potential considered is well defined.
\end{Remarque}
A H\"older inequality with $1=\frac{1}{p(\cdot)}+\frac{1}{p(\cdot)}+\frac{1}{ \tilde{p}(\cdot)}$  yields
$$\sup_{\| \psi \|_{L^{p'(\cdot)}}\leq 1}\int_0^T\mathcal{I}_\beta(|\psi|)(s)\|\vu(s,\cdot)\|^2_{L^q}ds\leq C\sup_{\| \psi \|_{L^{p'(\cdot)}}\leq 1}\|\mathcal{I}_\beta(|\psi|)\|_{L_t^{\tilde{p}(\cdot)}}\Big\|\|\vu(\cdot,\cdot)\|_{L_x^q}\Big\|_{L_t^{p(\cdot)}}\Big\|\|\vu(\cdot,\cdot)\|_{L_x^q}\Big\|_{L_t^{p(\cdot)}}.$$
\begin{Remarque} 
Note that the condition $p^->2$ in the statement of the proposition become from the  relationship $1=\frac{2}{p(\cdot)}+\frac{1}{ \tilde{p}(\cdot)}$ .
\end{Remarque}
Thus by considering the indexes defined by the relationship
\begin{equation}\label{Indices_Riesz}
\frac{1}{\tilde{p}(\cdot)}
=
\frac{1}{r(\cdot)}-
\left(
1- \frac{1}{2\alpha}-\frac{3}{2  \alpha    q}
\right),
\end{equation}
on Theorem \ref{theo.PotentialRieszVariable0}, we obtain 
\begin{multline}
\sup_{\| \psi \|_{L^{p'(\cdot)}}\leq 1}\|\mathcal{I}_\beta(|\psi|)\|_{L_t^{\tilde{p}(\cdot)}}\|\vu(\cdot,\cdot)\|_{L_t^{p(\cdot)}(L_x^q)}\|\vu(\cdot,\cdot)\|_{L_t^{p(\cdot)}(L_x^q)}
\\
\leq C\sup_{\| \psi \|_{L^{p'(\cdot)}}\leq 1}\|\psi\|_{L_t^{r(\cdot)}}\|\vu(\cdot,\cdot)\|_{L_t^{p(\cdot)}(L_x^q)}\|\vu(\cdot,\cdot)\|_{L_t^{p(\cdot)}(L_x^q)}.\label{Estimation_AvantInclusion}
\end{multline}
Gathering the hypothesis $\frac{\alpha}{p(\cdot)}
+
\frac{3}{2q}
<\alpha - \frac{1}{2}$
 and the relationship  \eqref{Indices_Riesz} we conclude that 
$$
\frac{1}{\tilde{p}(\cdot)}=1-\frac{2}{p(\cdot)}
\quad \text{and} \quad 
\frac{1}{p'(\cdot)}=1-\frac{1}{p(\cdot)},
$$ 
and then, we deduce $r(\cdot)<p'(\cdot)$. 
Considering Lemma \ref{lemme_embeding} with $r(\cdot)<p'(\cdot)$ and $\mathcal{X}= [0,T]$ in (\ref{Estimation_AvantInclusion}) we get  
\begin{eqnarray*}
\sup_{\| \psi \|_{L^{p'(\cdot)}}\leq 1}\|\psi\|_{L_t^{r(\cdot)}}\|\vu(\cdot,\cdot)\|_{L_t^{p(\cdot)}(L_x^q)}\|\vu(\cdot,\cdot)\|_{L_t^{p(\cdot)}(L_x^q)}\qquad\qquad\qquad\qquad\qquad\qquad\\
\leq \sup_{\| \psi \|_{L^{p'(\cdot)}}\leq 1}(1+T)\|\psi\|_{L_t^{p'(\cdot)}}\|\vu(\cdot,\cdot)\|_{L_t^{p(\cdot)}(L_x^q)}\|\vu(\cdot,\cdot)\|_{L_t^{p(\cdot)}(L_x^q)}\\
\leq (1+T)\|\vu(\cdot,\cdot)\|_{L_t^{p(\cdot)}(L_x^q)}\|\vu(\cdot,\cdot)\|_{L_t^{p(\cdot)}(L_x^q)}.
\end{eqnarray*}
Thus, by gathering these estimates with (\ref{Identite_NormeDualite}) we deduce 
\begin{equation}\label{Estimation_ApplicationBilineaire}
\left\|\int_{0}^t\mathfrak{g}_{t-s}^\alpha\ast \mathbb{P}(div(\vu \otimes \vu))(\cdot, \cdot)ds
\right\|_{L_t^{p(\cdot)}(L_x^q)}\leq C(1+T)\|\vu(\cdot,\cdot)\|_{L_t^{p(\cdot)}(L_x^q)}\|\vu(\cdot,\cdot)\|_{L_t^{p(\cdot)}(L_x^q)}.
\end{equation}
Thus, by considering $C_{\mathcal{B}}=C(1+T)$ we deduce the inequality \eqref{Control_Nolineal} and then we conclude the proof of Proposition \ref{prop.Control_Nolineal}. 
\end{proof}

\subsection*{End of the proof of Theorem \ref{Theoreme_1}}
Gathering together the hypothesis assumed in the statement of the theorem and the estimates obtained in  Propositions \ref{prop.Control_Uo_Lplq}, \ref{prop.Control_force_LpLq} and \ref{prop.Control_Nolineal}, we conclude that there exists $0<T<+\infty$ such that  
$$\|\vu_0\|_{L^{q} (\Rt)}+\| \vf\|_{L^1_t(L^{q}_x)}\leq\frac{C}{(1+T)\max\left\{T^{\frac{1}{p^{-}}}, T^{\frac{1}{p^{+}}}\right\}}.$$
Thus, by applying the Banach-Picard principle we obtain the desired solution. 
With this we conclude the proof of Theorem \ref{Theoreme_1}.

\subsection{Proof of  Theorem \ref{Theoreme_2}}
In the following we consider a variable exponent $p(\cdot)\in \mathcal{P}^{log}([0,+\infty[)$, and the functional space 
$$
\mathscr{E}=
\mathcal{L}^{p(\cdot)}_{   \frac{3}{2\alpha -1}      }
(\mathbb{R}^3,L^\infty([0,T[)).$$  
The space $\mathscr{E}$ is endowed with a Luxemburg norm as follows
\begin{equation}\label{Norm_PointFixe}
\|\cdot\|_{\mathscr{E}}=\max\{\|\cdot\|_{L^{p(\cdot)}_x(L^\infty_t)}, \|\cdot\|_{
L^{   \frac{3}{2\alpha -1}        }_x(L^\infty_t)}\},
\end{equation}
Under this functional setting we will consider the Banach-Picard principle to construct mild solutions for the integral equation  (\ref{NS_Integral}).
More precisely, in the following we will prove 3 propositions which will provide the core of the hypotheses of Theorem \ref{BP_principle}.
\\

To this end, we start by proving the following result regarding a control to the initial data.

\begin{Proposition}\label{prop.Control_Uo}
Consider $\alpha\in ]\frac 1 2, 1]$,  consider a variable exponent $p(\cdot)\in \mathcal{P}^{log}(\mathbb{R}^3)$ such that $p^->1$, and 
 a divergence free function $\vu_0\in \mathcal{L}^{p(\cdot)}_{   \frac{3}{2\alpha -1}      }(\mathbb{R}^3)$. Then, there exists a constant $C>0$ such that 
 \begin{equation}\label{Control_Uo}
\|\mathfrak{g}_t^\alpha\ast \vu_0\|_{\mathscr{E}}\leq C\|\vu_0\|_{\mathcal{L}^{p(\cdot)}_{   \frac{3}{2\alpha -1}        } }.
\end{equation}
\end{Proposition}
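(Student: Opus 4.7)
The plan is to dominate $|\mathfrak{g}_t^\alpha\ast\vu_0(x)|$ pointwise in $x$ by a constant multiple of the Hardy-Littlewood maximal function $\mathcal{M}(\vu_0)(x)$, uniformly in $t\in[0,T[$. Once this is achieved, taking the supremum in $t$ inside the spatial norms is immediate, and the proposition reduces to two boundedness properties of $\mathcal{M}$: the variable-exponent one (Theorem \ref{MaximalFunc_LebesgueVar}) and the classical one on $L^{3/(2\alpha-1)}$. The use of the mixed-norm definition \eqref{MixedLebesgue} then yields the estimate \eqref{Control_Uo}.

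The first step is to recall the standard pointwise bound on the fractional heat kernel: $\mathfrak{g}_t^\alpha$ is nonnegative, radially decreasing as a function of $|x|$, and satisfies $\|\mathfrak{g}_t^\alpha\|_{L^1(\R^3)}=1$ for every $t>0$ (these are consequences of the symbol representation and the pointwise decay estimates of the type recalled in Remark \ref{remark 2.1 MIAO} and Lemma \ref{Lemma 3.1 MIAO}). Applying Lemma \ref{lemme_conv_maximal} componentwise to $\vu_0$ therefore yields the uniform bound
\begin{equation*}
\sup_{t\in[0,T[}|\mathfrak{g}_t^\alpha\ast\vu_0(x)|\leq C\,\mathcal{M}(\vu_0)(x),\qquad x\in\R^3.
\end{equation*}
This is the structural step: all dependence on $t$ has been eliminated before taking any spatial norm.

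Next, I would take the $L^{p(\cdot)}_x$ norm on both sides. Since $p(\cdot)\in\mathcal{P}^{\log}(\R^3)$ with $p^->1$, Theorem \ref{MaximalFunc_LebesgueVar} provides
\begin{equation*}
\|\mathfrak{g}_t^\alpha\ast\vu_0\|_{L^{p(\cdot)}_x(L^\infty_t)}\leq C\|\mathcal{M}(\vu_0)\|_{L^{p(\cdot)}_x}\leq C\|\vu_0\|_{L^{p(\cdot)}_x}.
\end{equation*}
Taking instead the $L^{3/(2\alpha-1)}_x$ norm and invoking the classical Hardy-Littlewood theorem (which is licit since $\alpha\in]\tfrac12,1]$ gives $3/(2\alpha-1)>1$) yields
\begin{equation*}
\|\mathfrak{g}_t^\alpha\ast\vu_0\|_{L^{3/(2\alpha-1)}_x(L^\infty_t)}\leq C\|\vu_0\|_{L^{3/(2\alpha-1)}_x}.
\end{equation*}
Combining these two inequalities through the defining formula \eqref{MixedLebesgue} of the mixed Lebesgue norm yields \eqref{Control_Uo}.

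The delicate point is the first step: the reduction to $\mathcal{M}(\vu_0)$ hinges on $\mathfrak{g}_t^\alpha$ being radially decreasing with $L^1$ norm independent of $t$. For $\alpha=1$ this is transparent, but for general $\alpha\in]\tfrac12,1[$ it relies on a nontrivial property of the fractional heat semigroup, usually established via subordination or via the explicit self-similar profile $\mathfrak{g}_t^\alpha(x)=t^{-3/(2\alpha)}\mathfrak{g}_1^\alpha(x/t^{1/(2\alpha)})$ together with the radial monotonicity of $\mathfrak{g}_1^\alpha$. This is the main technical input and the only piece that is not purely a soft functional-analytic manipulation; the remainder is the straightforward application of the two maximal inequalities already assembled in the Preliminaries.
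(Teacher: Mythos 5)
Your proposal is correct and follows essentially the same route as the paper: a pointwise domination of $\sup_{t}|\mathfrak{g}_t^\alpha\ast\vu_0|$ by $\mathcal{M}(\vu_0)$ via Lemma \ref{lemme_conv_maximal}, followed by the variable-exponent maximal bound of Theorem \ref{MaximalFunc_LebesgueVar} and the classical one on $L^{3/(2\alpha-1)}$, combined through the mixed norm \eqref{MixedLebesgue}. Your closing remark on the radial monotonicity and $t$-independent $L^1$ normalization of $\mathfrak{g}_t^\alpha$ for $\alpha<1$ is a point the paper asserts without comment, and flagging it is a worthwhile addition.
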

\begin{proof}
Since $\vu_0\in L^{\frac{3}{2\alpha -1}}\subset L^1_{\text{loc}}$, we have that $\vu_0$ is a locally integrable function. Now, considering that the fractional heat kernel $\mathfrak{g}_t^\alpha$ is a  radially decreasing function, 
by  Lemma \ref{lemme_conv_maximal} we can write
$$\|\mathfrak{g}_t^\alpha\ast \vu_0(x)\|_{L^\infty_t}\leq C\mathcal{M}(\vu_0)(x).$$
Thus, by  recalling  
the norm defined in \eqref{Norm_PointFixe}, we obtain the estimate
\begin{eqnarray*}
\|\mathfrak{g}_t^\alpha\ast \vu_0\|_{\mathscr{E}}
&\leq & C\max\{\|\mathcal{M}(\vu_0)\|_{L^{p(\cdot)}}, \|\mathcal{M}(\vu_0)\|_{L^{\frac{3}{2\alpha -1}  }}\}.
\end{eqnarray*}
Now, since $p(\cdot)\in \mathcal{P}^{log}(\mathbb{R}^3)$ with $p^->1$, by Theorem \ref{MaximalFunc_LebesgueVar} we conclude that the maximal function $\mathcal{M}$ is bounded in the Lebesgue space $L^{p(\cdot)}(\mathbb{R}^3)$. 
Considering this, and the fact that  $\mathcal{M}$ is also bounded in $L^{\frac{3}{2\alpha -1}  }$,  we obtain
$$\|\mathfrak{g}_t^\alpha\ast \vu_0\|_{\mathscr{E}}\leq C\max\big\{\|\vu_0\|_{L^{p(\cdot)}},\|\vu_0\|_{L^{\frac{3}{2\alpha -1}  }}\big\}\leq  C\|\vu_0\|_{\mathcal{L}^{p(\cdot)}_{\frac{3}{2\alpha -1}  }}.$$
With this we conclude the proof. 
\end{proof}

\begin{Proposition}\label{prop.Control_f_preambulo}
Consider $\alpha\in ]\frac 1 2, 1]$, a variable exponent $p(\cdot)\in \mathcal{P}^{\log}(\mathbb{R}^3)$ such that $p^->1$,  and  a function $\mathcal{F}\in \mathcal{L}^{\frac{p(\cdot)}{2}}_{\frac{3}{2(2\alpha-1)}}(\mathbb{R}^3, L^\infty([0,T[))$.
Then, there exists a numerical constant $C>0$ such that 
\begin{equation}\label{Control_f_preambulo}
\left\|\int_{0}^t\mathfrak{g}_{t-s}^\alpha\ast div(\mathcal{F})(\cdot, \cdot)ds\right\|_{\mathscr{E}}
\leq C
\|\mathcal{F}\|_{
\mathcal{L}^{\frac{p(\cdot)}{2}}_{\frac{3}{2(2\alpha-1)},x}(L^\infty_t)
}. 
\end{equation}
\end{Proposition}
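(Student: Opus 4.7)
The plan is to recognize the term as a convolution against the gradient of the fractional heat kernel (after transferring the divergence), control the time integral by a pointwise Riesz potential in the space variable, and then apply the variable-exponent Riesz potential estimate from Proposition \ref{Proposition_RieszPotential} together with the classical Hardy--Littlewood--Sobolev inequality for the constant index $\frac{3}{2\alpha-1}$.

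More concretely, integrating by parts in $y$ I would first rewrite
$$\int_{0}^{t}\mathfrak{g}^{\alpha}_{t-s}\ast \operatorname{div}(\mathcal{F})(s,x)\,ds
=\int_{0}^{t}\int_{\mathbb{R}^{3}}(\nabla\mathfrak{g}^{\alpha}_{t-s})(x-y)\cdot \mathcal{F}(s,y)\,dy\,ds,$$
and then bound $|\mathcal{F}(s,y)|$ by $M(y):=\|\mathcal{F}(\cdot,y)\|_{L^{\infty}_{t}}$. Taking $L^{\infty}$ in $t$ and changing the variable $\tau=t-s$, I obtain the pointwise bound
$$\Bigl\| \int_{0}^{t}\mathfrak{g}^{\alpha}_{t-s}\ast\operatorname{div}(\mathcal{F})(s,\cdot)\,ds\Bigr\|_{L^{\infty}_{t}}(x)
\leq \int_{\mathbb{R}^{3}} M(y)\Bigl(\int_{0}^{\infty}|\nabla\mathfrak{g}^{\alpha}_{\tau}(x-y)|\,d\tau\Bigr)dy.$$
Using the kernel estimate of Remark \ref{remark 2.1 MIAO} (with $n=3$) and the scaling substitution $u=\tau^{1/(2\alpha)}/|x-y|$, the inner time integral is controlled by $C|x-y|^{2\alpha-4}$, provided $\alpha\in]\tfrac12,1]$. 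Hence the quantity is pointwise dominated by $C\,\mathcal{I}_{2\alpha-1}(M)(x)$, where $\mathcal{I}_{2\alpha-1}$ is the Riesz potential of Definition \ref{Definition_RieszPotential} in dimension $3$ (note $2\alpha-1\in]0,1]$, so the potential is well defined).

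It then remains to estimate $\mathcal{I}_{2\alpha-1}(M)$ in the $\mathscr{E}$-norm, which by definition equals $\max\{\|\cdot\|_{L^{p(\cdot)}},\|\cdot\|_{L^{3/(2\alpha-1)}}\}$. For the variable-exponent component I would apply Proposition \ref{Proposition_RieszPotential} with $n=3$, $\beta=2\alpha-1$, the variable exponent $p(\cdot)/2$, and $\mathfrak{p}=\frac{3}{2(2\alpha-1)}$; a direct computation gives
$$\rho(\cdot)=\frac{3\cdot p(\cdot)/2}{3-(2\alpha-1)\cdot \frac{3}{2(2\alpha-1)}}=p(\cdot),$$
so $\|\mathcal{I}_{2\alpha-1}(M)\|_{L^{p(\cdot)}}\leq C\|M\|_{\mathcal{L}^{p(\cdot)/2}_{3/(2(2\alpha-1))}}$. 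For the classical component, the usual Hardy--Littlewood--Sobolev inequality with $\tfrac{1}{3/(2\alpha-1)}=\tfrac{2(2\alpha-1)}{3}-\tfrac{2\alpha-1}{3}$ gives $\|\mathcal{I}_{2\alpha-1}(M)\|_{L^{3/(2\alpha-1)}}\leq C\|M\|_{L^{3/(2(2\alpha-1))}}$. Taking the max and unwrapping $M(y)=\|\mathcal{F}(\cdot,y)\|_{L^{\infty}_{t}}$ yields \eqref{Control_f_preambulo}.

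The main technical point I expect to have to justify carefully is the pointwise kernel calculation that converts the time integral into the Riesz kernel $|x-y|^{2\alpha-4}$, since this requires verifying that the constraint $\alpha\in]\tfrac12,1]$ is exactly what ensures both convergence at $\tau=0$ and the correct exponent $2\alpha-1\in]0,1]$ for applying the variable-exponent Riesz potential bound. The other verification worth checking is that the chosen mixed exponents match the scaling required by Proposition \ref{Proposition_RieszPotential}, which is exactly why the constant index $\tfrac{3}{2(2\alpha-1)}$ appears in the hypothesis on $\mathcal{F}$.
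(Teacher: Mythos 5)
Your proposal is correct and follows essentially the same route as the paper's proof: pointwise kernel decay from Remark \ref{remark 2.1 MIAO}, the scaling computation reducing the time integral to $C|x-y|^{2\alpha-4}$, domination by $\mathcal{I}_{2\alpha-1}(\|\mathcal{F}(\cdot,y)\|_{L^\infty_t})$, and then Proposition \ref{Proposition_RieszPotential} for the $L^{p(\cdot)}$ component together with classical Hardy--Littlewood--Sobolev for the $L^{3/(2\alpha-1)}$ component. Your explicit verification that $\rho(\cdot)=p(\cdot)$ is a welcome detail the paper leaves implicit.
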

\begin{proof}  
     By the  Minkowski's integral inequality, we can write
$$\left|\int_{0}^t\mathfrak{g}_{t-s}^\alpha\ast 
div(\mathcal{F})
(s,x)ds\right|\leq C\int_{0}^t\int_{\mathbb{R}^3}|\vn\mathfrak{g}_{t-s}^\alpha(x-y)| |\mathcal{F}(s,y)|dyds.$$
Then, 
 by the decay properties of the fractional heat kernel in Remark \ref{remark 2.1 MIAO},
 and the Fubini theorem, we obtain
\begin{eqnarray*}
\left|\int_{0}^t\mathfrak{g}_{t-s}^\alpha\ast div(\mathcal{F})
(s,x)ds\right|
&\leq &C\int_{\mathbb{R}^3}\int_{0}^t
\frac{1}{  
 (\ |t-s|^{ \frac{1}{
       2\alpha }   } +
|x-y| \ )^{4}
}
|\mathcal{F}(s,y)|dsdy.
\end{eqnarray*}
Now, by considering the $L^\infty_t$ norm on $\mathcal{F}$, 
we get
$$\left|\int_{0}^t\mathfrak{g}_{t-s}^\alpha\ast 
div(\mathcal{F})
(s,x)ds\right|
\leq C\int_{\mathbb{R}^3}\int_{0}^t
\frac{1}{  
 (\ |t-s|^{ \frac{1}{
       2\alpha }   } +
|x-y| \ )^{4}
}
ds \|\mathcal{F}(\cdot,y)\|_{L^\infty_t}dy.$$
Then, considering 
the Riesz potential defined in (\ref{eq.Definition_RieszPotential}), the fact that
 $\alpha\in ]\frac 1 2, 1]$, 
and the estimate
\begin{equation}
\begin{aligned}
\int_0^t \frac{d s}{\left(|t-s|^{\frac{1}{2 \alpha}}+|x-y|\right)^{4}} & \leq \int_0^{+\infty} \frac{d s}{\left(s^{\frac{1}{2 \alpha}}+|x-y|\right)^{4}} \\
& =\int_0^{+\infty} \frac{|x-y|^{2 \alpha} d \beta}{\left(\left(|x-y|^{2 \alpha} \beta\right)^{\frac{1}{2 \alpha}}+|x-y|\right)^{4}}  =\frac{1}{|x-y|^{4-2 \alpha}} 
\int_0^{+\infty} \frac{d \beta}{\left(1+\beta^{\frac{1}{2 \alpha}}\right)^{4}},
\end{aligned}
\end{equation}
we obtain 
$$\left|\int_{0}^t\mathfrak{g}_{t-s}^\alpha\ast 
div(\mathcal{F})
(s,x)ds\right|\leq C\int_{\mathbb{R}^3}\frac{1}{|x-y|^{4-2\alpha}} \|\mathcal{F}(\cdot,y)\|_{L^\infty_t}dy=
C
\mathcal{I}_{2\alpha-1}(\|\mathcal{F}(\cdot,\cdot)\|_{L^\infty_t})(x).$$
Note that, this last estimate implies   
\begin{equation}\label{eq.24.janv}
\left\|\int_{0}^t\mathfrak{g}_{t-s}^\alpha\ast div(\mathcal{F})(s,x)ds\right\|_{L^\infty_t}\leq C\mathcal{I}_{2\alpha-1}(\|\mathcal{F}(\cdot,\cdot)\|_{L^\infty_t})(x).
\end{equation}
Then, to obtain the $\mathcal{L}^{p(\cdot)}_{\frac{3}{2\alpha-1}}$-norm given in (\ref{Norm_PointFixe}), from the estimate \eqref{eq.24.janv} we get 
\begin{eqnarray*}
\left\|\int_{0}^t\mathfrak{g}_{t-s}^\alpha\ast div(\mathcal{F})(s,x)ds\right\|_{L^{p(\cdot)}_x(L^\infty_t)}&\leq &C\left\|\mathcal{I}_{2\alpha-1}(\|\mathcal{F}(\cdot,\cdot)\|_{L^\infty_t})(\cdot)\right\|_{L^{p(\cdot)}_x},
\end{eqnarray*}
and 
\begin{eqnarray*}
\left\|\int_{0}^t\mathfrak{g}_{t-s}^\alpha\ast div(\mathcal{F})(s,x)ds\right\|_{L^{\frac{3}{2\alpha-1}}_x(L^\infty_t)}&\leq &C\left\|\mathcal{I}_{2\alpha-1}(\|\mathcal{F}(\cdot,\cdot)\|_{L^\infty_t})(\cdot)\right\|_{L^{\frac{3}{2\alpha-1}}_x}.
\end{eqnarray*}
Thus, considering  Proposition \ref{Proposition_RieszPotential} we can write 
\begin{equation}\label{eq 2 24 janv}
\left\|\mathcal{I}_{2\alpha-1}(\|\mathcal{F}(\cdot,\cdot)\|_{L^\infty_t})(\cdot)\right\|_{L^{p(\cdot)}_x}\leq C\left\|\|\mathcal{F}(\cdot,\cdot)\|_{L^\infty_t}\right\|_{\mathcal{L}^{\frac{p(\cdot)}{2}}_{\frac{3}{2(2\alpha-1)},x}}=\|\mathcal{F}\|_{\mathcal{L}^{\frac{p(\cdot)}{2}}_{\frac{3}{2(2\alpha-1)},x}(L^\infty_t)}.
\end{equation}
On the other hand, since the Riesz potentials  are bounded on the classical Lebesgue space $L^{\frac{3}{2\alpha-1}}$ we obtain 
\begin{equation}\label{eq 3 24 janv}
\left\|\mathcal{I}_{2\alpha-1}(\|\mathcal{F}(\cdot,\cdot)\|_{L^\infty_t})(\cdot)\right\|_{L^{\frac{3}{2\alpha-1}}_x}\leq C\left\|\|\mathcal{F}\|_{L^\infty_t}\right\|_{L^{\frac{3}{2(2\alpha-1)}}_x}=\|\mathcal{F}\|_{L^{\frac{3}{2(2\alpha-1)}}_x(L^\infty_t)}.
\end{equation}
Then, 
gathering together 
the norm $\|\cdot\|_{\mathscr{E}}$ given in (\ref{Norm_PointFixe})
with the estimates 
\eqref{eq 2 24 janv} and
\eqref{eq 3 24 janv}, we get 
$$\left\|\int_{0}^t\mathfrak{g}_{t-s}^\alpha\ast div(\mathcal{F})(s,x)ds\right\|_{\mathscr{E}} 
\leq C
\|\mathcal{F}\|_{
\mathcal{L}^{\frac{p(\cdot)}{2}}_{\frac{3}{2(2\alpha-1)},x}(L^\infty_t)
}
<+\infty.$$
With this we conclude the proof. 
\end{proof}

\begin{Proposition}\label{prop.Control_Nolineal2}
Consider $\alpha\in ]\frac 1 2, 1]$ and a variable exponent $p(\cdot)\in \mathcal{P}^{\log}(\mathbb{R}^3)$ such that $p^->1$. Then, 
 there exists a constant $C_{\mathcal{B}}>0$ such that 
\begin{equation}
\left\|
\int_{0}^t\mathfrak{g}_{t-s}^\alpha\ast \mathbb{P}(div(\vu \otimes \vu))(\cdot, \cdot)
ds\right\|_{\mathscr{E}}\leq C_{\mathcal{B}}\|\vu\|_{\mathscr{E}}\|\vu\|_{\mathscr{E}}.
\end{equation}
\end{Proposition}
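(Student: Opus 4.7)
The plan is to mimic closely the structure of the proof of Proposition \ref{prop.Control_f_preambulo}, the key change being that the tensor $\mathcal{F}$ is replaced by $\vu\otimes\vu$ and that the singular part $\mathfrak{g}^\alpha_{t-s}*\mathbb{P}\,\mathrm{div}$ is handled in a single stroke via the kernel $K^\alpha_{t-s}$ of $e^{-t(-\Delta)^{\alpha/2}}\mathbb{P}\,\mathrm{div}(\cdot)$ (Remark \ref{RMK_kernel_and_B} and Remark \ref{remark_kernek_K}). This avoids having to control the Riesz transforms defining $\mathbb{P}$ directly on the mixed-norm space $L^{p(\cdot)}_x(L^\infty_t)$, which is the natural obstacle here.

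First I would rewrite, by Remark \ref{RMK_kernel_and_B},
\[
\mathcal{B}(\vu,\vu)(t,x)=\int_0^t K^\alpha_{t-s}*(\vu\otimes\vu)(s,\cdot)(x)\,ds,
\]
and use the pointwise estimate $|K^\alpha_\tau(x)|\leq C(\tau^{1/(2\alpha)}+|x|)^{-4}$ (with $n=3$) together with $|\vu\otimes\vu|\leq C|\vu|^2$ to get
\[
|\mathcal{B}(\vu,\vu)(t,x)|\leq C\int_0^t\!\int_{\mathbb{R}^3}\frac{|\vu(s,y)|^2}{\bigl((t-s)^{1/(2\alpha)}+|x-y|\bigr)^{4}}\,dy\,ds.
\]
Next I would majorize $|\vu(s,y)|^2\leq \|\vu(\cdot,y)\|_{L^\infty_t}^2$ pointwise in $y$, apply Fubini, and exploit the time integral bound already obtained in Proposition \ref{prop.Control_f_preambulo},
\[
\int_0^t\frac{ds}{\bigl((t-s)^{1/(2\alpha)}+|x-y|\bigr)^4}\leq \frac{C}{|x-y|^{4-2\alpha}},
\]
uniformly in $t\in[0,T[$. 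This is valid precisely because $\alpha\in\;]\tfrac12,1]$, so $4-2\alpha\in[2,3)$. Taking the supremum in $t$ then yields
\[
\bigl\|\mathcal{B}(\vu,\vu)(\cdot,x)\bigr\|_{L^\infty_t}\leq C\,\mathcal{I}_{2\alpha-1}\!\left(\|\vu(\cdot,\cdot)\|_{L^\infty_t}^2\right)(x),
\]
since $n-(2\alpha-1)=4-2\alpha$ with $n=3$.

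It remains to measure this Riesz potential in the two spatial norms defining $\mathscr{E}$. I would apply Proposition \ref{Proposition_RieszPotential} with $\beta=2\alpha-1$, variable exponent $p(\cdot)/2$ and constant exponent $\mathfrak{p}=\frac{3}{2(2\alpha-1)}$; the admissibility condition $0<\beta<\min\{n/p^+,n/\mathfrak{p}\}$ reduces to $2\alpha-1<2(2\alpha-1)$ (automatic) together with a mild upper bound on $p^+$. The output exponent is
\[
\rho(\cdot)=\frac{n\,p(\cdot)/2}{n-\beta\mathfrak{p}}=\frac{3\,p(\cdot)/2}{3-\tfrac32}=p(\cdot),
\]
which is exactly what is needed for the $L^{p(\cdot)}_x$ component. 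For the constant component, the classical Hardy-Littlewood-Sobolev inequality gives $\mathcal{I}_{2\alpha-1}:L^{3/(2(2\alpha-1))}\to L^{3/(2\alpha-1)}$ since $\tfrac{1}{3/(2(2\alpha-1))}-\tfrac{2\alpha-1}{3}=\tfrac{2\alpha-1}{3}$. Combining both, together with the trivial bound $\|\,\|\vu(\cdot,\cdot)\|_{L^\infty_t}^2\,\|_{\mathcal{L}^{p(\cdot)/2}_{3/(2(2\alpha-1))}}\leq C\|\vu\|_{\mathscr{E}}^2$, delivers $\|\mathcal{B}(\vu,\vu)\|_{\mathscr{E}}\leq C_{\mathcal{B}}\|\vu\|_{\mathscr{E}}^2$.

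The main obstacle is not the pointwise bookkeeping but the choice of functional setting: the spatial norm must be $\mathcal{L}^{p(\cdot)}_{3/(2\alpha-1)}$ rather than pure $L^{p(\cdot)}$ because Theorem \ref{theo.PotentialRieszVariable0} does not provide the required flexibility to have $p(\cdot)$ as input \emph{and} output of $\mathcal{I}_{2\alpha-1}$ with a gain of $2\alpha-1$ derivatives; the mixed spaces of \cite{chamorro_paper_lpvar} (Proposition \ref{Proposition_RieszPotential}) fix exactly this issue by allowing the constant exponent $\mathfrak{p}=\frac{3}{2(2\alpha-1)}$ to absorb the scaling, which is the reason the mixed Lebesgue norms were introduced in the formulation of Theorem \ref{Theoreme_2}.
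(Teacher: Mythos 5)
Your proposal is correct and follows essentially the same route as the paper: rewrite the bilinear term via the kernel $K^\alpha_{t-s}$ of $e^{-t(-\Delta)^{\alpha/2}}\mathbb{P}\,\mathrm{div}$, use its pointwise decay and the time-integral bound to reduce to the Riesz potential $\mathcal{I}_{2\alpha-1}$ applied to $\|\vu\|_{L^\infty_t}^2$, then estimate the variable-exponent component with Proposition \ref{Proposition_RieszPotential} and the constant component with the classical Hardy--Littlewood--Sobolev inequality, closing with the H\"older inequality in mixed spaces. Your exponent arithmetic ($\rho(\cdot)=p(\cdot)$ and the output exponent $3/(2\alpha-1)$) matches the paper's, and your closing remark on why the mixed spaces are needed is exactly the content of Remark \ref{Rem_Riesz_MixedLebesgue}.
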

\begin{proof}
We begin by noticing that, by considering 
the Minkowski’s integral inequality, the Remark \ref{RMK_kernel_and_B},  and the  
  decay properties of the kernel $K_t^\alpha(x)$ (see Remark \ref{remark_kernek_K}), 
    we obtain
\begin{eqnarray*}
\left|
\int_{0}^t\mathfrak{g}^\alpha_{t-s}\ast \mathbb{P}(div(\vu \otimes \vu))ds
\right|
&\leq &
C\int_{0}^t\int_{\mathbb{R}^3}
|K^\alpha_{t-s}(x-y)|
  |\vu(s,y)|  |\vu(s,y)|dyds
\\
&\leq & C\int_{\mathbb{R}^3}\int_{0}^t
\frac{1}{   (\ |t-s|^{ \frac{1}{
       2\alpha }   } +
|x-y| \ )^{4}
}
|\vu(s,y)| |\vu(s,y)|dsdy.
\end{eqnarray*}
Then, considering the $L^\infty_t$-norm we can write 
\begin{eqnarray*}
\left|
\int_{0}^t\mathfrak{g}^\alpha_{t-s}\ast \mathbb{P}(div(\vu \otimes \vu))ds
\right|
&\leq & 
C\int_{\mathbb{R}^3}
\left(\int_{0}^t 
\frac{1}{  
 (\ |t-s|^{ \frac{1}{
       2\alpha }   } +
|x-y| \ )^{4}
}
ds\right)
\|\vu(\cdot,y)\|_{L^\infty_t}
\|\vu(\cdot,y)\|_{L^\infty_t}
dy.
\end{eqnarray*}
Now, remark that similarly to in the proof of Proposition \ref{prop.Control_f_preambulo},  we have 
\begin{equation}
\begin{aligned}
\int_0^t \frac{d s}{\left(|t-s|^{\frac{1}{2 \alpha}}+|x-y|\right)^{4}} 
& = C \frac{1}{|x-y|^{4-2 \alpha}} 
\end{aligned}
\end{equation}
and then, we conclude the estimate
\begin{equation}
\begin{aligned}
\left|
\int_{0}^t\mathfrak{g}^\alpha_{t-s}\ast \mathbb{P}(div(\vu \otimes \vu))ds
\right|
&\leq & 
C
\int_{\mathbb{R}^3}
\frac{1}{|x-y|^{4-2 \alpha}} 
\|\vu(\cdot,y)\|_{L^\infty_t}
\|\vu(\cdot,y)\|_{L^\infty_t}
dy.
\end{aligned}
\end{equation}
Thus, by considering Definition 
\ref{Definition_RieszPotential}, the last inequality can be recast as 
$$\left|
\int_{0}^t\mathfrak{g}^\alpha_{t-s}\ast \mathbb{P}(div(\vu \otimes \vu))ds
\right|
\leq C
\mathcal{I}_{2\alpha -1}
\big(\|\vu\|_{L^\infty_t}\|\vu\|_{L^\infty_t}\big)
(x).$$
Now, to obtain the $\mathcal{L}^{p(\cdot)}_{\frac{3}{2\alpha-1}}$-norm given in (\ref{Norm_PointFixe}), we start by noticing that  from the estimates above we can write 
\begin{eqnarray*}
\left\|
\int_{0}^t\mathfrak{g}^\alpha_{t-s}\ast \mathbb{P}(div(\vu \otimes \vu))ds
\right\|_{L^{p(\cdot)}_x(L^\infty_t)}&
\leq&
C
\|\mathcal{I}_{2\alpha-1}
(\|\vu\|_{L^\infty_t}\|\vu\|_{L^\infty_t})\|_{L^{p(\cdot)}_x(L^\infty_t)}
,
\end{eqnarray*}
and
\begin{eqnarray*}
\left\|
\int_{0}^t\mathfrak{g}^\alpha_{t-s}\ast \mathbb{P}(div(\vu \otimes \vu))ds
\right\|_{L^{\frac{3}{2\alpha -1}}_x(L^\infty_t)}
&\leq &
C\|\mathcal{I}_{2\alpha-1}
(\|\vu\|_{L^\infty_t}\|\vu\|_{L^\infty_t})\|_{L^{\frac{3}{2\alpha -1}}_x(L^\infty_t)}.
\end{eqnarray*}
Now, considering that the Riesz potential $\mathcal{I}_{2\alpha -1}$ satisfies $\|\mathcal{I}_{2\alpha-1}(\varphi)\|_{L^{\frac{3}{2\alpha -1}}}\leq C\|\varphi\|_{L^{\frac{3}{2(2\alpha -1)}}  }$, a H\"older inequality (see Remark \ref{Rem_Holder_Mixed_Lebesgue_Var}) and Proposition \ref{Proposition_RieszPotential}, we get the estimates 
\begin{eqnarray*}
\left\|
\int_{0}^t\mathfrak{g}^\alpha_{t-s}\ast \mathbb{P}(div(\vu \otimes \vu))ds
\right\|_{L^{p(\cdot)}_x(L^\infty_t)}
&\leq & 
C \left\|\|\vu\|_{L^\infty_t}\|\vu\|_{L^\infty_t}\right\|_{\mathcal{L}^{\frac{p(\cdot)}{2}}_{   \frac{3}{2(2\alpha -1)}     }   }\leq  C\|\vu\|_{\mathcal{L}^{p(\cdot)}_{   \frac{3}{2\alpha -1}   ,x}(L^\infty_t)}
\|
\vu
\|_{\mathcal{L}^{p(\cdot)}_{   \frac{3}{2\alpha -1}      ,x}(L^\infty_t)} 
,
\end{eqnarray*}
and
\begin{eqnarray*}
\left\|
\int_{0}^t\mathfrak{g}^\alpha_{t-s}\ast \mathbb{P}(div(\vu \otimes \vu))ds
\right\|_{L^\frac{3}{2\alpha -1}_x(L^\infty_t)}
&\leq &
C\left\|\|\vu\|_{L^\infty_t}\|\vu\|_{L^\infty_t}\right\|_{L^{ \frac{3}{2(2\alpha -1)}   }_x}\leq C\|\vu\|_{L^\frac{3}{2\alpha -1}_x(L^\infty_t)}\|\vu\|_{L^\frac{3}{2\alpha -1}_x(L^\infty_t)}.
\end{eqnarray*}
\begin{Remarque}\label{Rem_Riesz_MixedLebesgue}
Note that, in the case that we had considered Theorem \ref{theo.PotentialRieszVariable0} instead of Proposition \ref{Proposition_RieszPotential}, we had obtained an estimate of the form 
$\|\mathcal{I}_{2\alpha-1}(\varphi)\|_{L^{p(\cdot)}}\leq \|\varphi\|_{L^{\frac{p(\cdot)}{2}}}$, 
which, due the strong relationship between the the variable exponents involved, yields the constant exponent
$p(\cdot)\equiv \frac{3}{2\alpha-1}$.
\end{Remarque}
Gathering together these last estimates, and using the definition of the norm $\|\cdot\|_{\mathscr{E}}$ given in (\ref{Norm_PointFixe}) we obtain 
$$\left\|
\int_{0}^t\mathfrak{g}^\alpha_{t-s}\ast \mathbb{P}(div(\vu \otimes \vu))ds
\right\|_{\mathscr{E}}\leq C_{\mathcal{B}}\|\vu\|_{\mathscr{E}}\|\vu\|_{\mathscr{E}}.$$
Thus Proposition \ref{prop.Control_Nolineal} is proven.
\end{proof}

\subsection*{End of the proof of Theorem \ref{Theoreme_2}}
Considering the hypothesis assumed and the estimates obtained in  Propositions \ref{prop.Control_Uo}, \ref{prop.Control_f_preambulo} and \ref{prop.Control_Nolineal2}, 
we get the condition 
$$C\left(
\|\vu_0\|_{\mathcal{L}^{p(\cdot)}_{   \frac{3}{2\alpha -1}        }}
+
\|\mathcal{F}\|_{\mathcal{L}^{\frac{p(\cdot)}{2}}_{\frac{3}{2(2\alpha-1)},x}(L^\infty_t)}
\right)<\frac{1}{4C_{\mathcal{B}}}.$$
Thus, by applying the Banach-Picard principle stated in Theorem \ref{BP_principle}, we obtain the desired solution. With this we
conclude the proof of Theorem \ref{Theoreme_2}.
\\

\paragraph{\bf Acknowledgements.} 
The author warmly thanks Pierre-Gilles Lemarié-Rieusset and Diego Chamorro for their helpful comments and advises. The author is supported by the ANID postdoctoral program BCH 2022 grant No. 74220003.

	
	\bibliographystyle{siam}
	\bibliography{biblioFNS}

\end{document}